\numberwithin{equation}{section}
\renewcommand{\a}{\alpha}
\renewcommand{\b}{\beta}
\newcommand{\g}{\gamma}
\newcommand{\G}{\Gamma}
\renewcommand{\d}{\delta}
\newcommand{\D}{\Delta}
\newcommand{\e}{\epsilon}
\newcommand{\z}{\zeta}
\renewcommand{\th}{\theta}
\renewcommand{\k}{\kappa}
\newcommand{\m}{\mu}
\newcommand{\x}{\xi}
\renewcommand{\r}{\rho}
\newcommand{\s}{\sigma}
\renewcommand{\t}{\tau}
\newcommand{\F}{\Phi}
\newcommand{\h}{\chi}
\renewcommand{\O}{\Omega}
\newcommand{\vs}{\varsigma}
\newcommand{\T}{{\mathbb T}}
\newcommand{\C}{{\mathbb C}}
\newcommand{\R}{{\mathbb R}}
\newcommand{\Z}{{\mathbb Z}}
\newcommand{\DD}{{\mathbb D}}
\newcommand{\ab}{{\mathbf a}}
\newcommand{\kb}{{\mathbf k}}
\newcommand{\rb}{{\mathbf r}}
\newcommand{\vb}{{\mathbf v}}
\newcommand{\Bb}{{\mathbf B}}
\newcommand{\Db}{{\mathbf D}}
\newcommand{\Pb}{{\mathbf P}}
\newcommand{\MF}{\mathfrak M}
\newcommand{\Ac}{{\mathcal A}}
\newcommand{\Ec}{{\mathcal E}}
\newcommand{\Hc}{{\mathcal H}}
\newcommand{\Kc}{{\mathcal K}}
\newcommand{\Pc}{{\mathcal P}}
\newcommand{\SO}{{\rm SO}\,}
\newcommand{\supp}{\hbox{{\rm supp}}\,}
\newcommand{\dbar}{\overline{\partial}}
\newcommand{\bg}{\pmb{\gamma}}
\newtheorem{theorem}{Theorem}[section]
\newtheorem{proposition}[theorem]{Proposition}
\theoremstyle{definition}
\newtheorem{definition}[theorem]{Definition}
\theoremstyle{remark}
\newtheorem{remark}[theorem]{Remark}
\newtheorem{example}[theorem]{Example}
\begin{document}

\title[Bergman space Toeplitz operators]{Toeplitz operators defined by sesquilinear forms: Bergman space case}

\author[Rozenblum]{Grigori Rozenblum}
\address{Department of Mathematics, Chalmers  University of Technology,   Gothenburg, Sweden}
\email{grigori@chalmers.se}

\author[Vasilevski]{Nikolai Vasilevski }
\address{Department of Mathematics, Cinvestav, Mexico-city, Mexico}
\email{nvasilev@duke.math.cinvestav.mx}

\begin{abstract}The definition of Toeplitz operators in the Bergman space $\Ac^2(\mathbb{D})$ of square integrable analytic functions in the unit disk in the complex plane is extended in such way that it covers many cases where the traditional definition does not work. This includes, in particular, highly singular symbols such as measures, distributions, and certain hyper-functions.

\end{abstract}

\maketitle

\section{Introduction}\label{intro}
Toeplitz operators play an important role in Mathematics and Applications. Having appeared almost hundred years ago, as matrices with elements being constant along  diagonals parallel to the main one, these operators have undergone several stages of generalization, as more applications  required this. One of their most common (but not the most general) definitions is as follows.

Let $\Hc$ be a Hilbert space of functions on a set $\Omega\subset\R^d$ or $\Omega\subset\C^d$, and let $\Ac$ be its closed subspace, with $\Pb:\Hc\to\Ac$ being the orthogonal projection. For a function $a$ on $\Omega$ (usually measurable and bounded), the Toeplitz operator on $\Ac$ with symbol $a$ is defined as $T_a: \Ac\ni f\mapsto \Pb(af)\in\Ac$.

Probably, the best studied class of Toeplitz operators consists of  Riesz-Toeplitz ones, where the space $\Hc$ is $L_2(\T)$, $\T\subset\C$ being the unit circle, and $\Ac$ is the Hardy space consisting of boundary values of analytic functions. A lot of attention have been attracted recently to another class of Toeplitz operators, for which the enveloping space $\Hc$ is $L_2(\O)$, $\O$ being the unit disk in $\C$ (the unit ball or the polydisk in $\C^d$), alternatively  the whole $\C^d$ or $\R^d$, and the subspace $\Ac$ consists of solutions of some elliptic equation or system (the leading examples here are the spaces of analytic or harmonic functions.) These latter Toeplitz operators are called the Bergman-Toeplitz ones. The present paper is a continuation of \cite{RozVas}, and is devoted to the development of the theory of the Bergman-Toeplitz operators acting on the Bergman space of analytic functions on the unit disk $\mathbb{D}$.

Toeplitz operators, as they were initially defined, make immediate sense for a bounded symbol $a$. In fact, the multiplication by a bounded function is a bounded operator in $L_2$, so the operator $T_a$ is automatically bounded as a composition of two bounded operators. For Riesz-Toeplitz operators it seems to be unlikely to extend essentially their definition to more general classes of symbols. On the other hand, for Bergman-Toeplitz operators, it turns out to be possible to define, in a reasonable way, the operators whose symbols belong to certain classes of unbounded functions, measures, distributions, and even some more general objects. These generalizations, which prove to be quite useful, stem from the special property of the spaces $\Ac$: they are \emph{reproducing kernel Hilbert spaces}, the ones, for which each evaluation  functional at a point of $\O$ is bounded with respect to the $L_2$-norm.

A far going generalization of the traditional approach to Toeplitz operators has been developed in \cite{RozVas}, where  the notion of the \emph{sesquilinear form symbol} was introduced. The concrete study was concentrated there around operators in the Fock space, the space of entire analytic function in $\C$, square  integrable against the Gaussian measure. It turned out that quite a number of reasonable operators fail to be Toeplitz in frame of the traditional definition, while they fit conveniently into the sesquilinear forms setting. This, in particular, enabled us to introduce even the operators with symbol being  hyper-functions of a special kind.

In the present paper we follow the pattern of \cite{RozVas} and consider Toeplitz operator in  another leading case, the Bergman space of analytic function in the unit disk, square integrable with respect to the Lebesgue measure. Having defined the general sesquilinear forms as symbols, we present a number of particular classes of symbols, including the ones absent in the Fock space case. This study includes, in particular, a convenient sesquilinear forms representation for operators in commutative subalgebras, where the structure of the sesquilinear form is closely related to the structure of the algebra. We also present a number of examples of operators that fail to be Toeplitz in the traditional setting but fit quite well into our more general picture. In some cases we give necessary and sufficient conditions for a subspace to be the range of a Toeplitz operator with symbol from the prescribed class.

The final part of the paper is devoted to the study of operators with highly singular symbols, the ones involving hyper-functions or, in other words, distributions of unbounded differential order. By means of certain new estimates for the $k$-Carleson measures, we find sufficient conditions for such distributions to define bounded Toeplitz operators.

 One should notice that under our approach, the role of the enveloping Hilbert space $\Hc$ is not that important, and, actually, can be eliminated altogether. The latter circumstance becomes crucial when considering Toeplitz operators in  spaces of solutions of elliptic equations or systems in the whole space,  where there may be no natural candidate for the role of the enveloping space. We will consider such situations in publications to follow.

\section{Preliminaries}
\label{se:general}

We recall here the general approach \cite{RozVas} to the extension of the notion of Bergman-Toeplitz (just Toeplitz in what follows) operators for a wide class of singular symbols, given in the context of reproducing kernel Hilbert spaces.

Let $\mathcal{H}$ be a Hilbert space of functions defined in a domain $\Omega$ in $\mathbb{R}^d$ or $\mathbb{C}^d$, with $\langle\cdot,\cdot\rangle$, $\|\cdot\|$ denoting its inner product, respectively, norm, and let $\mathcal{A}$ be a closed subspace of $\mathcal{H}$ on which every evaluation functional $\Ac\ni f\mapsto f(z) $, $z\in\Omega,$ is bounded. For each $z \in \Omega$, we denote by  $\kb_z(\cdot) \in \mathcal{A}$  the element realizing by the Riesz theorem the evaluation functional at the point $z$, i.e.,
\begin{equation*}
  f(z) = \langle f(\cdot), \kb_z(\cdot) \rangle, \ \ \ \ \mathrm{for \ all} \ \ \ f \in \mathcal{A};
\end{equation*}
in this case the orthogonal projection $\Pb$ of $\mathcal{H}$ onto $\mathcal{A}$ has the form
\begin{equation*}
  \Pb \, : \mathcal{H}\ni f(z) \ \longmapsto \ \langle f(\cdot), \kb_z(\cdot) \rangle \in \mathcal{A},
\end{equation*}
or
\begin{equation*} 
 (\Pb f)(z) = \langle f(\cdot), \kb_z(\cdot) \rangle.
\end{equation*}

Recall that a bounded sesquilinear form $F(\cdot,\cdot)$ on $\Ac$ is a mapping
\begin{equation*}
 F(\cdot,\cdot) \, : \ \Ac \oplus \Ac \longrightarrow \ \mathbb{C},
\end{equation*}
linear in the first argument and anti-linear in the second one,  satisfying the condition
\begin{equation*}
 |F(f,g)| \leq C \|f\|\cdot \|g\|, \ \ \ \ \mathrm{for \ all} \ \ \ f,\, g \in \Ac
\end{equation*}
with some constant $C>0$.
We adopt then the following vocabulary.\\
Given a  bounded sesquilinear form $F(\cdot,\cdot)$ on $\Ac$, we define the Toeplitz operator $T_F$ \emph{with sesquilinear form symbol} $F$ acting on $\Ac$ as follows
\begin{equation}\label{Def:operatorTF}
 (T_F f)(z) = F(f(\cdot),\kb_z(\cdot))
\end{equation}
If the sesquilinear form $F$ stems in some well defined way from some analytic object denoted, say, by $a$, the operator \eqref{Def:operatorTF} can be alternatively denoted by $T_a$, in order to stress this dependence, as long as this does not cause a misunderstanding.

Concrete operator considerations of thus defined Toeplitz operators were elaborated in \cite{RozVas}   for  operators acting in the classical Fock (or Segal-Bargmann) space. In the present paper we specify our approach to the operators acting in the Bergman space $\mathcal{A}^2(\mathbb{D})$ over the unit disk $\mathbb{D}$ in $\mathbb{C}$. This Bergman space was being studied for a long time, from different points of view, with numerous applications; we refer, in particular, to the books \cite{HeKoZ}, \cite{Zhu}.

That is, in what follows, the enveloping Hilbert space is $\Hc=L_2(\DD)$ with respect to the \emph{normalized} Lebesgue measure $dV=\pi^{-1}dxdy$, $z=x+iy$, and  the subspace $\mathcal{A}= \mathcal{A}^2(\mathbb{D})$ is its closed subspace consisting of analytic functions. It is well known that $\mathcal{A}^2(\mathbb{D})$ is a reproducing kernel Hilbert space with
\begin{equation}\label{kernel}
  \kb_z(w)  =\overline{\kb_w(z)} = (1-\overline{z}w)^{-2},\,
\end{equation}
so that the orthogonal \emph{Bergman projection} $\Bb$ of $L_2(\DD)$ onto $\mathcal{A}^2(\mathbb{D})$ has the form
\begin{equation*}
 (\Bb f)(z) = \langle f(\cdot), \kb_z(\cdot) \rangle.
\end{equation*}

Recall that, given $a(z) \in  L_{\infty}(\DD)$, the classical Toeplitz operator with symbol $a$ is the compression onto $\mathcal{A}^2(\mathbb{D})$ of the multiplication operator $(M_a f)(z) = a(z)f(z)$ on $L_2(\DD)$:
\begin{equation}\label{Oper.naive}
 T_a \, : \ f \in \mathcal{A}^2(\mathbb{D}) \ \longmapsto \ \Bb(af) \in \mathcal{A}^2(\mathbb{D}).
\end{equation}
It is well known that in this case, the (function) symbol $a$ is uniquely, as an element in $L_\infty,$ defined by the Toeplitz operator \eqref{Oper.naive}.

Classical Toeplitz operators with bounded measurable symbols fit in our context as follows.

\begin{example}\textsf{Classical Toeplitz operators in the Bergman space} \\
We start with any \emph{bounded linear functional} $\Phi \in (L_1(\mathbb{D}))'$. Recall that such a functional is uniquely defined by a function $a \in L_{\infty}(\mathbb{D})$ and has the form
\begin{equation*}
 \Phi(u) \equiv \Phi_a(u) = \int_{\mathbb{D}} a(z)u(z)dV(z).
\end{equation*}
Note that for arbitrary $f,\, g \in \mathcal{A}^2(\mathbb{D})$ the product $f\overline{g}$ belongs to $L_1(\mathbb{D})$, and the finite linear combinations of such products are dense in $L_1(\mathbb{D})$. We define the sesquilinear forms $F_a$ as
\begin{equation} \label{eq:F_a}
 F_a(f,g) = \Phi_a(f\overline{g}).
\end{equation}
This form is obviously bounded:
\begin{equation*}
 |F_a(f,g)| \leq \|\Phi_a\| \|f\overline{g}\|_{L_1} \leq \|\Phi_a\| \|f\| \|g\|=\|a\|_{L_\infty}\|f\| \|g\|.
\end{equation*}
Then
\begin{eqnarray*}
 (T_{F_a} f)(z) &=& \Phi_a(f\overline{\kb_z}) = \int_{\mathbb{D}} a(w)f(w)\overline{\kb_z(w)}dV(w)\\
 &=&  \int_{\mathbb{D}} \frac{a(w)f(w)}{(1-z\overline{w})^2}\, dV(w) = (T_af)(z),
\end{eqnarray*}
i.e, the Toeplitz operator $T_{F_a}$ with sesquilinear form symbol $F_a$ coincides with the classical Toeplitz operator with symbol $a$. Moreover, all classical Toeplitz operators can be obtained starting with a functional $\Phi$ in $(L_1(\mathbb{D}))'$, which defines in turn  the form (\ref{eq:F_a}).
\end{example}

The approach of the above example immediately enables us to extend considerably the notion of Toeplitz operators for a wide class of defining sesquilinear form symbols.
In what follows we will use even more general approach based on the construction introduced in \cite{RozVas}, which is suitable for any reproducing kernel Hilbert space (for more details, results, and examples, see  \cite{RozVas}).

Let $X$ be a (complex) linear topological space (not necessarily complete). We denote by $X'$
its dual space (the set of all continuous linear
functionals on $X$), and denote by $\Phi(\phi) \equiv (\Phi,\phi)$ the intrinsic pairing of $\Phi
\in X'$ and $\phi \in X$. Let then $\mathcal{A}$
be a reproducing kernel Hilbert space with $\kb_z(\cdot)$ being its reproducing kernel
function.

By a continuous $X$-valued sesquilinear form $G$  on $\mathcal{A}$ we mean a continuous
mapping
\begin{equation*}
 G(\cdot,\cdot) \, : \ \mathcal{A} \oplus \mathcal{A}
 \longrightarrow \ X,
\end{equation*}
which is linear in the first argument and anti-linear in the second argument.

Then, given a continuous $X$-valued sesquilinear form $G$ and an element $\Phi \in X'$, we
define the sesquilinear form $F_{G,\Phi}$ on
$\mathcal{A}$ by

\begin{equation*} 
 F_{G,\Phi}(f,g) = \Phi(G(f,g)) = (\Phi, G(f,g)).
\end{equation*}
Being continuous, this form is bounded, and thus, by the Riesz theorem, defines a bounded (Toeplitz) operator $T_{G,\Phi}$ such that

\begin{equation}\label{eq:T_F(G,Psi)0}\langle T_{G,\Phi} f,g\rangle =(\Phi, G(f,g)).
\end{equation}

In particular, for $ g=\kb_z$, \eqref{eq:T_F(G,Psi)0} takes the form
\begin{equation*}
\langle T_{G,\Phi} f,\kb_z\rangle=(\Phi, G(f,\kb_z)).
\end{equation*}
Since $T_{G,\Phi} f\in \Ac$, the reproducing property of $\kb_z$ implies the explicit description of the action of the operator $T_{G,\Phi}$:

\begin{equation} \label{eq:T_F(G,Psi)}
 (T_{G,\Phi}f)(z) := (T_{F_{G,\Phi}}f)(z) = (\Phi, G(f,\kb_z)).
\end{equation}

We stress here that the sesquilinear form that defines a Toeplitz operator may have several quite different analytic expressions (involving different spaces $X$ and functionals $\Phi$), producing, nevertheless, the same Toeplitz operator. We recall as well that under the above approach the set of Toeplitz operators {\rm (\ref{eq:T_F(G,Psi)})} forms a $^*$-algebra, see the detailed reasoning in \cite{RozVas}.

\section{Measures and distributions as symbols}

In this section we describe how our general approach enables us to define in a reasonable way the Toeplitz operators with rather singular symbols.

\begin{example}\textsf{Toeplitz operators defined by Carleson measures} \\
Recall (see, for example, \cite[Section 7.2]{Zhu}) that a finite positive Borel measure $\mu$ on $\mathbb{D}$ is called a \emph{Carleson measure} (C-measure) for the Bergman space $\mathcal{A}^2(\mathbb{D})$ if there exists a constant $C > 0$ such that
\begin{equation} \label{eq:Carleson}
 \int_{\mathbb{D}} |f(z)|^2 d\mu(z) \leq C \int_{\mathbb{D}} |f(z)|^2 dV(z), \ \ \ \ \mathrm{for \ all} \ \  f \in \mathcal{A}^2(\mathbb{D}).
\end{equation}
It is well known (say, from the Cauchy representation formula) that a finite measure $\m$ with compact support strictly  inside $\DD$ is automatically a C-measure, so the property of a given measure $\m$ to be a C-measure is determined only by its behavior near the boundary of $\DD$, see, e.g., \cite{Zhu}.

Given a C-measure $\m$, we set $X = L_1(\mathbb{D}, d\mu)$, $X' = L_{\infty}(\mathbb{D}, d\mu)$, $G(f,g)(z) =
 f(z)\overline{g(z)}$, $\Phi= \Phi_1 = 1
 \in L_{\infty}(\mathbb{D}, d\mu)$, so that
\begin{equation*}
 F_{G,\Phi_1}(f,g) = \int_{\mathbb{D}} f(z)\overline{g(z)}d\mu := F_{\mu}(f,g).
\end{equation*}
This form is obviously bounded in $\Ac^2(\mathbb{D})$ by the Cauchy-Schwartz inequality and (\ref{eq:Carleson}).
Using \eqref{eq:T_F(G,Psi)}, we obtain the formula for the action of the resulting Toeplitz operator:
\begin{equation*}
 (T_{F_{\mu}} f)(z) = \int_{\mathbb{D}} f(w)\overline{\kb_z(w)}d\mu(w) = \int_{\mathbb{D}} \frac{f(w)d\mu(w)}{(1-z\overline{w})^2} = (T_{\mu}f)(z),
\end{equation*}
i.e, the Toeplitz operator $T_{F_\mu}$ with sesquilinear form symbol $F_{\mu}$ is nothing but the Toeplitz operator defined by a C-measure $\mu$ (see, for example, \cite[Section 7.1]{Zhu}).

A natural generalization of this situation involves a complex valued Borel measure $\mu$
such that its variation $|\mu|$ is a C-measure. In
such a case $X = L_1(\mathbb{D}, d\mu) := L_1(\mathbb{D}, d|\mu|)$, $X' = L_{\infty}(\mathbb{D},
d\mu) := L_{\infty}(\mathbb{D}, d|\mu|)$ with the same
formulas for $G(f,g)$ and $\Phi$ as before. \\
In particular, this description extends to the case of a positive C-measure $\mu$,
$X = L_1(\mathbb{D}, d\mu)$, $X' = L_{\infty}(\mathbb{D}, d\mu)$, $G(f,g)(z) =
f(z)\overline{g(z)}$, $\Phi= \Phi_a = a \in
L_{\infty}(\mathbb{D}, d\mu)$, so that
\begin{equation*}
 F_{G,\Phi_a}(f,g) = \int_{\mathbb{D}} a(z) f(z)\overline{g(z)}\,d\mu.
\end{equation*}
and
\begin{equation*}
 (T_{F_{G,\Phi_a}}f)(z) =  \int_{\mathbb{D}} \frac{a(w)f(w)\,d\mu(w)}{(1-z\overline{w})^2}.
\end{equation*}
\end{example}

\begin{example}\textsf{Toeplitz operators with distributional symbols} \\
Any distribution $\Phi$ in $\Ec'(\DD)$ has a finite order, and thus can be extended to a continuous functional in the space of functions with finite smoothness,
\begin{equation*}
    |\Phi(h)|\le C(\Phi)\|h\|_{C^N(\mathbb{K})},
\end{equation*}
for some $N$ and some compact set $\mathbb{K}\Subset \DD$ containing the support of $\Phi$.
The Cauchy formula implies  in a standard way that the $C^N(\mathbb{K})$-norm of the product $h=f\overline{g}$, $f,g\in \mathcal{A}^2(\mathbb{D})$, is majorated by the product of $\mathcal{A}^2(\mathbb{D})$-norms of $f$ and $g$. Therefore the sesquilinear form
\begin{equation} \label{eq:dist_Phi}
 F_{\Phi}(f,g) = \Phi(f\overline{g})
\end{equation}
is bounded on $\mathcal{A}^2(\mathbb{D})$ and thus defines a bounded operator.

The operator
\begin{equation*}
 T_{F_\Phi}:f(z) \mapsto \Phi\left(f(\cdot)\overline{\kb_z(\cdot)}\right)
\end{equation*}
defined in this manner coincides with the standardly defined \cite{AlexRoz,RozToepl, PeTaVi3} Toeplitz operator $T_{\Phi}$ with distributional symbol $\Phi$.

We give now a more explicit description of such operators. Let $\Phi$ be a distribution with compact support in $\DD$. By the structure theorem for such distributions (see, e.g., Theorem in Sect.4.4. in \cite{Gelf}, page 116-117 of the AP edition of 1968),  $\Phi$ admits a representation as a \emph{finite} sum
\begin{equation}\label{Struct}
    \Phi=\sum_{q}D^q u_q,
\end{equation}
where  $q=(q_1,q_2)$, $D=(D_1,D_2)$; $u_q$ are continuous functions which can be chosen as having support in an arbitrarily close neighborhood of the support of $\Phi$. Rearranging the derivatives, we can write \eqref{Struct} as the finite sum
\begin{equation}\label{Struct2}
    \Phi=\sum_{q}\partial^{q_1}\dbar^{q_2}u_q,
\end{equation}
again with (some other) continuous functions $u_q$, having support in an arbitrarily close neighborhood of $\supp \Phi$. Using \eqref{Struct2}, formula \eqref{eq:dist_Phi} is transformed to
\begin{equation*}
    (T_{\Phi}f)(z)=\sum_q (-1)^{q_1+q_2}\int_{\DD} u_q(w)\cdot \dbar_w^{q_2}\overline{\kb_w(z)}\, \partial ^{q_1}f(w)\,dV(w).
\end{equation*}

We present here several examples of the action of such operators $T_{\Phi}
$.
\begin{example} \label{ex:deriv_delta}
Let $\Phi$ be a derivative of the $\delta$-distribution,
$\Phi=\partial^\a\dbar^\b\delta$, so that $\Phi(g) = (-1)^{\a+\b}\partial^\a\dbar^\b g(0)$.
Then, by \eqref{eq:dist_Phi},
\begin{gather}\label{TFEx1}
    (T_{\Phi}f)(z) = (\partial^\a\dbar^\b\delta)(f(\cdot)k_z(\cdot)) = (-1)^{\a+\b}\, \dbar_w^\b \overline{k_z(0)}\,\partial^\a f(0)\\ \nonumber = (-1)^{\a+\b}(\b+1)!z^{\b}\partial^\a f(0).
\end{gather}
Considering finite linear combinations of the distributions of the form $\Phi$, we see that for any distribution having support at the origin, the corresponding Toeplitz operator with this distribution as symbol is well defined via the sesquilinear forms. Moreover, the range of such operator is contained in the space of polynomials.
\end{example}

\begin{example}More generally, for the derivative of the $\delta$--distribution at the point $\z\in\DD$, $\Phi=\partial^\a\dbar^\b\delta_\z$, we have, similar to \eqref{TFEx1},
\begin{equation}\label{TFEx2}
    (T_{\Phi}f)(z) = (-1)^{\a+\b}\, \frac{(\b+1)!z^{\b}}{(1-z\overline{\z})^{-(2+\b)}}\,\partial^\a f(\z).
\end{equation}
Analogously to Example \ref{ex:deriv_delta}, we see here that the range of the Toeplitz operators \eqref{TFEx2} is contained in the space of rational functions without simple poles.
\end{example}

\bigskip
It is convenient to normalize the symbol $\Phi$ of Example \ref{ex:deriv_delta}. Given $p,q \in \mathbb{Z}_+$, we consider the distributional symbol
\begin{equation*}
 \Phi_{p,q} = \frac{(-1)^{p+q}}{\sqrt{(p+1)(q+1)}\,p!q!}\, \partial^p\,\dbar^q\delta.
\end{equation*}
Then, by (\ref{TFEx1}), the Toeplitz operator $T_{\Phi_{p,q}}$ acts on the standard monomial basis of $\mathcal{A}^2(\mathbb{D})$
\begin{equation}\label{basis}
 e_k(z) = \sqrt{k+1}\, z^k, \ \ \ \ k \in \mathbb{Z}_+,
\end{equation}
as follows
\begin{equation*}
 T_{\Phi_{p,q}} e_k =
\left\{
\begin{array}{ll}
 e_q, & \mathrm{if} \ k = p, \\
 0, & \mathrm{otherwise}.
\end{array} \right.
\end{equation*}
Thus, the Toeplitz operator $T_{\Phi_{p,q}}$  is the rank one operator $P_{p,q}f = \langle f, e_p \rangle e_q$ on $\mathcal{A}^2(\mathbb{D})$. In the special case $p=q$ the operator $T_{\Phi_{p,p}}$ is the one-dimensional orthogonal projection onto the subspace generated by $e_p$. It is important to note that  $T_{\Phi_{p,p}}=P_{p,p}$ is the diagonal operator having the eigenvalue sequence $(0,..., 0, 1, 0,0,...)$ with respect to the basis \eqref{basis}.
Further on, we abbreviate $\Phi_{p,p}=\Phi_p$ and $P_{p,p}=P_p$.

We mention as well that the linear combinations of the rank one Toeplitz operators $T_{\Phi_{p,q}} = P_{p,q}$, $p,q \in \mathbb{Z}_+$, form a dense subset both in the space of all finite rank and in the space of all compact operators on $\mathcal{A}^2(\mathbb{D})$ in the norm operator topology. To see this, it is sufficient to observe that any rank one operator $P_{u,v} = \langle \cdot, u \rangle v$, where $u,\, v  \in \mathcal{A}^2(\mathbb{D})$, can be
be norm approximated by finite rank operators of the forms
\begin{equation*}
  \sum_{p=1}^{p_0} \sum_{q=1}^{q_0}\overline{u_p}\,v_q P_{p,q},
\end{equation*}
where $u_p = \langle u, e_p\rangle$ and $v_q = \langle v, e_q\rangle$ are the Fourier coefficients of $u$ and $v$ respectively.

Note that, when extending the class of symbols from $L_{\infty}$-functions to  more general objects we may lose the property of uniqueness of a symbol for a given Toeplitz operator; the same Toeplitz operator can have now quite different defining symbols.

Indeed, consider the following singular integral operators
\begin{equation*}
 (S_{\mathbb{D}}\varphi)(z) = - \int_{\mathbb{D}} \frac{\varphi(\zeta)}{(\zeta - z)^2} dV(\zeta)  \ \ \mathrm{and} \ \  (S^*_{\mathbb{D}}\varphi)(z) = - \int_{\mathbb{D}} \frac{\varphi(\zeta)}{(\overline{\zeta} - \overline{z})^2} dV(\zeta),
\end{equation*}
which are known to be bounded on $L_2(\mathbb{D})$ and mutually adjoint.  It was established in \cite{SanVas}, that for all $p,q \in \mathbb{Z}_+$,
\begin{equation} \label{eq:SIO-symbol}
 T_{\Phi_{p,q}} =
 \left\{\begin{array}{ll}
   T_{(S_{\mathbb{D}}^*)^{q}(S_{\mathbb{D}})^{q} -(S_{\mathbb{D}}^*)^{q+1}(S_{\mathbb{D}})^{q+1} } T_{\sqrt{\frac{p+1}{q+1}}\overline{z}^{p-q}},
& p > q, \\
   T_{(S_{\mathbb{D}}^*)^{p}(S_{\mathbb{D}})^{p} -(S_{\mathbb{D}}^*)^{p+1}(S_{\mathbb{D}})^{p+1} },
& p=q, \\
   T_{\sqrt{\frac{q+1}{p+1}}z^{q-p}} T_{(S_{\mathbb{D}}^*)^{p}(S_{\mathbb{D}})^{p} -(S_{\mathbb{D}}^*)^{p+1}(S_{\mathbb{D}})^{p+1}} , & p < q.
\end{array}
\right.
\end{equation}

It is interesting to observe also the following phenomenon.  Consider a finite rank Toeplitz operator, which, by \cite{AlexRoz}, is a Toeplitz operator whose symbol is a finite combination of $\delta$-distributions and their derivatives. The support of its distributional symbol is a finite set of points located somewhere in the unit disk $\mathbb{D}$.
By the above arguments, this finite rank Toeplitz operator can be norm-approximated by finite rank Toeplitz operators whose distributional symbols have just one and the same one-point support $\{0\}$. Later on, in Sect. \ref{hyperfun}, this observation will be extended in more detail,
\end{example}

\section{Diagonalizable operators related to commutative algebras}

As  well known, any normal operator in the Hilbert space is diagonalizable with respect to its spectral decomposition. Given a family of (not necessarily normal) operators, one should not expect that they can be diagonalizable simultaneously. However, as it concerns Toeplitz operators on $\Ac^2(\mathbb{D})$, many diagonalizable  families, related to commutative subalgebras, have been found, see \cite{Vasil}. There are three model cases of them, and all others are obtained from these model cases via M\"obius transformations. In this section we show that the operators related to these model cases can be naturally viewed as our more generally defined Toeplitz operators, we give also the informative representations of the corresponding sesquilinear forms.
\subsection{Examples}
Toeplitz operators with radial symbols form one of the most well studied classes of Toeplitz operators.
  We start here with an example of a \emph{radial} operator that fails to be a Toeplitz one in the traditional setting.
\begin{example}\textsf{The reflection operator} \\
Let $(J\varphi)(z) = \varphi(-z)$ be the reflection operator in $\mathcal{A}^2(\mathbb{D})$. This operator is obviously bounded, and it acts on the standard monomial basis \eqref{basis} of $\mathcal{A}^2(\mathbb{D})$
as follows
\begin{equation*}
 (Je_k)(z) = (-1)^k e_k(z).
\end{equation*}
This means that $J$ is a diagonal operator with respect to the standard basis (\ref{basis}), and its  eigenvalue sequence has the form $\bg_J = \{(-1)^k\}_{k \in \mathbb{Z}_+}$. Thus (see \cite{Zorboska03}) the operator $J$ is  \emph{radial}. Assume now that $J$ is a Toeplitz operator, $J=T_a$ for some symbol $a \in L_{\infty}(\mathbb{D})$. Then (see \cite{Zhou_Chen_Dong_2011}) the symbol $a$ must be a \emph{radial} function $a=a(|z|)$, and by \cite{GrMaxVas}, the eigenvalue sequence $\bg_a$ of the operator $J=T_a$ should belong to the class $\SO(\mathbb{Z}_+)$ of slowly oscillating sequences introduced by Schmidt \cite{Schmidt_1924}. Recall that $\SO(\mathbb{Z}_+)$ consists of all $\ell_{\infty}$-sequences $\bg = \{\gamma(n)\}_{n \in \mathbb{Z}_+}$ satisfying the condition
\begin{equation*}
 \lim_{\frac{n}{m} \to 1} |\gamma(n) - \gamma(m)| = 0.
\end{equation*}
This is not the case for the operator under consideration, and therefore the operator $J$ cannot be a Toeplitz one with an $L_{\infty}$-function serving as symbol. By the same reason the operator $J$ cannot even belong to the algebra generated by Toeplitz operators with bounded measurable radial symbols. Moreover, $J$  is effectively separated from this algebra: as it is easy to show, for any operator $T$ in this
algebra,  the norm  $\|J - T \| $ is at least $1$, so the zero operator is the best norm approximation of $J$ by operators in the above algebra.

At the same time,we can set $X = L_1(\mathbb{D})$, $X' = L_{\infty}(\mathbb{D})$, $G(f,g) = f(-z)\overline{g(z)}$, $\Phi = 1 \in L_{\infty}(\mathbb{D})$,
so that
\begin{equation*}
 F_{G,\Phi}(f,g) = \int_{\mathbb{D}} f(-z)\overline{g(z)}dV(z),
 \end{equation*}
 and, therefore, $J = T_{G,\Phi}$.
\end{example}

We note that another form, defining $J$ as a Toeplitz operator, is given by (\ref{eq:radial-form}) with $\gamma(n) = (-1)^n$, $n \in \mathbb{Z}_+$.

\medskip
We give now an example of a (radial) bounded operator that belongs to the algebra generated by Toeplitz operators with bounded measurable (radial) symbols and which itself cannot be represented as a Toeplitz operator with such symbol.

\begin{example}\textsf{The rank one projection} \\
Consider the orthogonal projection $P_0 f = \langle f, e_0 \rangle e_0$ of $\mathcal{A}^2(\mathbb{D})$ onto the one-dimensional subspace generated by $e_0$. It is a diagonal, and thus \emph{radial}, operator having the eigenvalue sequence $\bg_{P_0} = (1,0,0,...)$. Since $\bg_{P_0} \in c_0 \subset \SO(\mathbb{Z}_+)$, the operator $P_0$ belongs (see \cite{GrMaxVas}) to the algebra generated by Toeplitz operators with bounded measurable radial symbols. But by \cite[Theorem 6.1.4]{Vasil} it can not be a Toeplitz operator with this kind of symbol.
At the same time $P_0$  presents a simple example of  a Toeplitz operator with a \emph{not uniquely defined} more general symbol.

The first representation of $P_0$ is as follows. Consider the distributional symbol $F_0 = \delta$. Then for the Toeplitz operator $T_{F_0}$ we have
\begin{equation*}
 (T_{F_0}e_k,e_l)=  e_k(0)\overline{e_l(0)}= \left\{
\begin{array}{ll}
 1, & k = l = 0, \\
 0, & \mathrm{otherwise}.
\end{array} \right.
\end{equation*}
This means that the Toeplitz operator $T_{F_0}$ is nothing but the above one-dimensional projection $P_0$.

The second representation of $P_0$,
\begin{equation*}
 P_0 = T_{I - S_{\mathbb{D}}^*S_{\mathbb{D}}},
\end{equation*}
follows from (\ref{eq:SIO-symbol}) when $p = q = 1$.

We mention as well that, although the operator $P_0$ cannot be itself represented as a Toeplitz operator with a bounded measurable radial symbol, it can be norm-approximated  by Toeplitz operators with such symbols:
\begin{equation*}
 P_0 = \lim_{n \to \infty} T_{a_n},
\end{equation*}
where $a_n(r) = (n+3)(1-r^2)^{n+2}$.

Indeed, the eigenvalue sequence  $\pmb{\g}_{a_n}$ of the operator $T_{a_n}$ has the form \cite[Theorem 6.1.1]{Vasil}
\begin{eqnarray*}
 \gamma_{a_n}(k) &=& (k+1)\int_0^1 a_n(\sqrt{r})r^kdr \\
 &=& (n+3)(k+1)\int_0^1 (1-r)^{n+2}r^kdr \\
 &=& (n+3)(k+1) B(n+3,k+1) = \frac {(n+3)!(k+1)!}{(n+k+3)!}.
\end{eqnarray*}
Then
\begin{equation*}
 \|T_{a_n} - P_0\| = \|\pmb{\gamma}_{a_n} - \pmb{\gamma}_{P_0} \|_{\ell_{\infty}} = \gamma_{a_n}(1) = \frac{2}{n+4},
\end{equation*}
which implies the desired: $P_0 = \lim_{n \to \infty} T_{a_n}$.
\end{example}

\subsection{Sesquilinear form symbols for operators in commutative  subalgebras}

In the previous subsection we considered two particular cases of the so-called \emph{radial} operators, the ones that are diagonal with respect to the standard monomial basis (\ref{basis}) in $\mathcal{A}^2(\mathbb{D})$.
Other important special  classes of diagonalizable operators are the so-called \emph{vertical} and \emph{angular} operators. These three classes appear naturally under the study of the (only!) three model cases of commutative $C^*$-algebras generated by Toeplitz operators with specific classes of $L_{\infty}$-symbols.
These three classes are (see \cite{Vasil} for details): the \emph{elliptic} case - Toeplitz operators on the disk $\mathbb{D}$ with symbols depending only on the modulus $r =|z|$ of $z = |z|e^{\imath\theta} \in \mathbb{D}$,  the \emph{radial} symbols; the \emph{parabolic} case - Toeplitz operators on the upper-half plane $\Pi$ with symbols depending only on the imaginary part $y$ of $z = x+\imath y \in \Pi$, the \emph{vertical} symbols; and the \emph{hyperbolic} case - Toeplitz operators on the upper half-plane $\Pi$ with symbols depending only on the polar angle $\theta$  of $z = |z|e^{\imath\theta} \in \Pi$, the \emph{angular} symbols.

We explore now these three diagonalizable classes and show that the corresponding operators are the Toeplitz ones defined by the appropriate sesquilinear forms, different for each case.

\begin{example}\textsf{Radial operators} \\
Recall \cite{Zorboska03}, that the radial operators $S$ (acting on $\mathcal{A}^2(\mathbb{D})$) are those that commute with the rotation operators $(U_t f )(z) = f(e^{-\imath t} z)$, $t \in \mathbb{R}$. They are diagonal with respect to the basis (\ref{basis}) in $\mathcal{A}^2(\mathbb{D})$:
\begin{equation*}
 \langle S e_n(z), e_m(z) \rangle = \delta_{n,m}\,\gamma_S(n),
\end{equation*}
and a Toeplitz operator $T_a$ with bounded symbol is radial if and only if its symbol $a$ is radial.

The spectral sequence $\bg_S = \{\gamma_S(n)\}_{n \in \mathbb{Z}_+}$ of the radial operator $S$ belongs to $\ell_{\infty}$, and the correspondence  $ S \longmapsto \bg_S$ gives  an isometric isomorphism between the $C^*$-algebra of radial operators and $\ell_{\infty}$.
The spectral sequence $\bg_{T_a} = \bg_a = \{\gamma_a(n)\}_{n \in \mathbb{Z}_+}$ of a Toeplitz operator $T_a$ with radial symbol is calculated by the formula \cite[Theorem 6.1.1]{Vasil}
\begin{equation*}
 \gamma_a(n) = (n+1) \int_0^1 a\left(\sqrt{r}\right) r^n dr,
\end{equation*}
and the $C^*$-algebra generated by Toeplitz operators with bounded radial symbols is isomorphic to the algebra $\mathrm{SO}(\mathbb{Z}_+)$, see \cite{GrMaxVas}.

Thus, in particular, if the spectral sequence $\bg_S$ of some radial operator $S$ does not belong to $\mathrm{SO}(\mathbb{Z}_+)$, then $S$ cannot be a Toeplitz operator  with bounded symbol. At the same time all bounded radial operators can be viewed as Toeplitz ones under the following construction.

The operator (see \cite[Corollary 10.3.4]{Vasil})
\begin{equation*}
  R_\rb\, : f(z) \ \longmapsto \left\{\int_{\mathbb{D}} f(z)\overline{e_n(z)}\, dV(z) \right\}_{n \in \mathbb{Z}_+}
\end{equation*}
maps isometrically  $\mathcal{A}^2(\mathbb{D})$ onto $\ell_2$. We introduce then the space   $X=\ell_1$, with $X' = \ell_{\infty}$, and the $\ell_1$-valued sesquilinear form on $\mathcal{A}^2(\mathbb{D})$:
\begin{equation*}
 G(f,g) = (R_\rb f)\,(\overline{R_\rb g}) = \{(R_\rb f)(n)\,(\overline{R_\rb g})(n)\}_{n \in \mathbb{Z}_+}.
\end{equation*}
Having any element $\Phi = \bg = \{\gamma(n)\}_{n \in \mathbb{Z}_+} \in \ell_{\infty} = X'$, we define the sesquilinear form
\begin{equation} \label{eq:radial-form}
 F_{G,\Phi}(f,g) = \sum_{n \in \mathbb{Z}_+} \gamma(n)\cdot (R_\rb f)(n)\,(\overline{R_\rb g})(n),
\end{equation}
which in turn defines the Toeplitz operator
\begin{equation*}
 (T_{G,\Phi}f)(z) = F_{G,\Phi}(f,\kb_z).
\end{equation*}
It is straightforward that each radial operator $S$ is of the form $S =
T_{G,\Phi_S}$, where $\Phi_S = \bg_S$. Classical Toeplitz operators $T_a$ with bounded radial symbols $a$ are exactly those for which $\Phi_{T_a} = \bg_a$.

Note that the use of an arbitrary element $\bg = \{\gamma(n)\}_{n \in \mathbb{Z}_+} \in \ell_{\infty} = X'$ in the form (\ref{eq:radial-form}) is much more natural than taking just elements $\bg_a$ from a dense subset of $\mathrm{SO}(\mathbb{Z}_+)$, covering classical radial Toeplitz operators.
\end{example}

The situation with two other classes is quite similar, and their consideration uses the same approach as in the radial case. Therefore, we list  the main facts only. In this discussion it is more natural to pass from the unit disk $\mathbb{D}$ to the upper half-plane $\Pi$. This can be done, for example, via the unitary operator
\begin{equation*}
 (U_0 f)(w) = \frac{2}{(1-iw)^2}\, f\left(\frac{w-i}{1-iw}\right),\, w\in\Pi,
\end{equation*}
which maps isometrically both $L_2(\mathbb{D})$ onto $L_2(\Pi)$, and $\mathcal{A}^2(\mathbb{D})$  onto $\mathcal{A}^2(\Pi)$.

\begin{example}\textsf{Vertical operators} \\
Recall (see \cite{HerreraYanezMaximenkoVasilevski2013}) that the vertical operators $S$ (acting on $\mathcal{A}^2(\Pi)$) are those that commute with the horizontal translation operators $(H_h f )(z) = f(z-h)$, $h \in \mathbb{R}$.

Introduce the operator (see \cite[Corollary 10.3.8]{Vasil})
\begin{equation*}
  (R_\vb f)(x) = \sqrt{x} \int_{\Pi} f(w)e^{-i\overline{w}x} dV(w),
\end{equation*}
which maps isometrically $\mathcal{A}^2(\Pi)$ onto $L_2(\mathbb{R}_+)$.

Then, by \cite[Theorem 2.3]{HerreraYanezMaximenkoVasilevski2013}, a bounded operator $S$ on $\mathcal{A}^2(\Pi)$ is vertical if and only if there exists a function $\bg \in L_{\infty}(\mathbb{R}_+)$ such that the operator $S$ is unitary equivalent to the multiplication operator by (its spectral) function $\bg$: $S = R_\vb^* M_{\bg} R_\vb$, where $M_{\bg}f = \bg f$.

The correspondence $ S \longmapsto \bg_S$, with $\bg_S$ being the spectral function of $S$, is an isometric isomorphism between the $C^*$-algebra of vertical operators and $L_{\infty}(\mathbb{R}_+)$.

A Toeplitz operator $T_a$ is vertical if and only if its symbol is vertical, $a = a(\mathrm{Im}\,w)$. The spectral function $\bg_{T_a} = \bg_a(x)$ of a vertical Toeplitz operator $T_a$ is calculated by the formula \cite[Theorem 5.2.1]{Vasil}
\begin{equation*}
 \bg_a(x) = 2x \int_{\mathbb{R}_+} a(t)e^{-2tx} dt, \qquad x \in \mathbb{R}_+.
\end{equation*}
The $C^*$-algebra generated by vertical Toeplitz operators with bounded symbols is isomorphic \cite{HerreraYanezMaximenkoVasilevski2013} to the algebra $\mathrm{VSO}(\mathbb{R}_+)$, which consists of those functions in $L_{\infty}(\mathbb{R}_+)$ that are uniformly continuous with respect to the logarithmic
metric
\begin{equation*}
 \rho(x,y) = \left| \ln x - \ln y \right|.
\end{equation*}
 Note also that the spectral functions of vertical Toeplitz operators themselves form a dense subset in  $\mathrm{VSO}(\mathbb{R}_+)$.

In order to fit such operators to our general setting, we introduce  $X=L_1(\mathbb{R}_+)$, with $X' = L_{\infty}(\mathbb{R}_+)$, and the following $L_1(\mathbb{R}_+)$-valued sesquilinear form on $\mathcal{A}^2(\Pi)$
\begin{equation*}
 G(f,g) = (R_\vb f)\,(\overline{R_\vb g}).
\end{equation*}
Then for any element $\Phi = \bg(x) \in L_{\infty}(\mathbb{R}_+) = X'$, we define
\begin{equation*}
 F_{G,\Phi}(f,g) = \int_{\mathbb{R}_+} \bg(x)\, (R_\vb f)(x)\,(\overline{R_\vb g})(x)\, dx,
\end{equation*}
and construct the corresponding Toeplitz operator
\begin{equation*}
 (T_{G,\Phi}f)(z) = F_{G,\Phi}(f,\kb_z).
\end{equation*}
It is straightforward that each vertical operator $S$ is of the form $S =
T_{G,\Phi_S}$, where $\Phi_S = \bg_S$. Classical Toeplitz operators $T_a$ with bounded vertical symbols $a$ correspond to the case $\Phi_{T_a} = \bg_a$.
\end{example}

\begin{example}\textsf{Angular operators} \\
Recall \cite{EsmMaxVas}, that the angular operators $S$ (acting on $\mathcal{A}^2(\Pi)$) are those that commute with the group of dilation operators $(\D_h f )(z) = hf(hz)$, $h \in \mathbb{R}_+$.

The operator (see \cite[Corollary 10.3.12]{Vasil})
\begin{equation*}
  (R_\ab f)(x) = \sqrt{\frac{x}{1 - e^{1-2\pi x}}} \int_{\Pi} \overline{w}^{-ix-1} f(w)dV(w),
\end{equation*}
maps isometrically $\mathcal{A}^2(\Pi)$ onto $L_2(\mathbb{R})$.

Then (\cite[Theorem 2.1]{EsmMaxVas}) a bounded operator $S$ on $\mathcal{A}^2(\Pi)$ is angular if and only if there exists a function $\bg \in L_{\infty}(\mathbb{R})$ such that the operator $S$ is unitary equivalent to the multiplication operator by its spectral function $\bg$: $S = R_\ab^* M_{\bg} R_\ab$.

The correspondence $ S \longmapsto \bg_S$ is an isometric isomorphism between the $C^*$-algebra of angular operators and $L_{\infty}(\mathbb{R})$.

A Toeplitz operator $T_a$ is angular if and only if its symbol is angular, $a = a(\mathrm{arg}\,w)$. The spectral function $\bg_{T_a} = \bg_a(x)$ of an angular Toeplitz operator $T_a$ is calculated by the formula \cite[Theorem 7.2.1]{Vasil}
\begin{equation*}
 \bg_a(x) = \frac{2x}{1-e^{-2\pi x}} \int_0^{\pi} a(\theta)e^{-2x\theta}\,d\theta, \qquad x \in \mathbb{R}.
\end{equation*}
The $C^*$-algebra generated by angular Toeplitz operators with bounded symbols is isomorphic \cite{EsmMaxVas} to the algebra $\mathrm{VSO}(\mathbb{R})$, which consists of those functions in $L_{\infty}(\mathbb{R})$ that are uniformly continuous with respect to the arcsinh-metric
\begin{equation*}
 \rho(x,y) = \left| \mathrm{arcsinh}\, x - \mathrm{arcsinh}\, y \right|.
\end{equation*}
Again the spectral functions of angular Toeplitz operators themselves form a dense subset in  $\mathrm{VSO}(\mathbb{R})$.

Introduce now $X=L_1(\mathbb{R})$, with $X' = L_{\infty}(\mathbb{R})$, and the following $L_1(\mathbb{R})$-valued sesquilinear form on $\mathcal{A}^2(\Pi)$
\begin{equation*}
 G(f,g) = (R_\ab f)\,(\overline{R_\ab g}).
\end{equation*}
For any element $\Phi = \bg(x) \in L_{\infty}(\mathbb{R}) = X'$, we define
\begin{equation*}
 F_{G,\Phi}(f,g) = \int_{\mathbb{R}} \bg(x)\, (R_\ab f)(x)\,(\overline{R_\ab g})(x)\,dx,
\end{equation*}
and construct the corresponding Toeplitz operator
\begin{equation*}
 (T_{G,\Phi}f)(z) = F_{G,\Phi}(f,\kb_z).
\end{equation*}
Again each angular operator $S$ is of the form $S = T_{G,\Phi_S}$, where $\Phi_S = \bg_S$. Classical Toeplitz operators $T_a$ with bounded angular symbols $a$ correspond to the case $\Phi_{T_a} = \bg_a$.
\end{example}

\section{Distributional symbols in $\Ec'(\DD)$ and finite rank operators}
\subsection{Finite rank Toeplitz operators}

One more source of examples of bounded operators that are not Toeplitz ones in their traditional meaning is provided by the finite rank theorems.

First of all, by D. Luecking \cite{Lue2}, neither nonzero \emph{finite rank operator} can be a Toeplitz operator with function symbol.

Passing to measure symbols,  we consider the linear subspace $\Kc$ of $\mathcal{A}^2(\mathbb{D})$ which consists of finite linear combinations of functions $\kb_w(\cdot)$, $w\in\DD$. This subspace is known to be dense in $\mathcal{A}^2(\mathbb{D})$. As it follows from the explicit formula \eqref{kernel} for the Bergman kernel,
any finite linear combination of $\kb_w(\cdot)$ is a rational function, having a finite limit at infinity and possessing exclusively  second order poles.
In particular, this means that $\Kc$ does not fill the whole  $\mathcal{A}^2(\mathbb{D})$.

\begin{example}\label{ExFinRankFnction} 
Consider the following finite rank operator on $\mathcal{A}^2(\mathbb{D})$
\begin{equation} \label{eq:fin-rank}
 (Tf)(z)=\sum_{j=1}^N \,\langle f,f_j\rangle\, g_j(z),
\end{equation}
where $f_j\in\mathcal{A}^2(\mathbb{D})$, $g_j\in\mathcal{A}^2(\mathbb{D})$, $j=1,2,...,N$, with at least one of $g_j$ outside $\Kc$. We suppose that the functions $f_j$ are linearly independent, as well as the functions $g_j$, so that the number of summands in \eqref{eq:fin-rank} cannot be reduced. This operator is \emph{not} a Toeplitz operator with measure-symbol. Indeed, suppose that there is a measure $\mu$ such that $T=T_\mu$. By Luecking's theorem in \cite{Lue2}, this measure must be a finite linear combination of point measures at no more points $w_j$ than the rank of the operator is, $\mu=\sum_{j=1}^N a_j\d_{w_j}$. Thus, we would have
\begin{equation*}
    (Tf)(z)=(T_\m f)(z)=\int_{\DD} f(w)\kb_w(z)\,d\m(w)=\sum_{j=1}^N a_j\kb_{w_j}(z)f(w_j).
\end{equation*}
This means that the range of $T$ is spanned by the functions $\kb_{w_j}(z)$, i.e., lies in $\Kc$. This contradicts the choice of $T$, made in such way that the range of $T$ is not contained in $\Kc$. Thus $T$ is \emph{not} a Toeplitz operator with measure-symbol.
\end{example}

A minor modification of the above reasoning produces the complete description of bounded operators on $\mathcal{A}^2(\mathbb{D})$ that are not Toeplitz operators with symbols-distributions having a compact support in $\mathbb{D}$.

Denote by $\mathcal{M}$ the linear subspace of functions that are finite linear combinations of the reproducing kernels $\kb_w(z)$ and their derivatives in $\overline{w}$ variable, $\kb^{l}_w(z)=\dbar^l_w\kb_w(z)$, for different  $w\in\DD$. If $w\ne 0$, such function $\kb^{l}_w$ is a rational one, with pole  of order $2+l$, lying at the point $\bar{w}^{-1}$ outside $\overline{\mathbb{D}}$. For $w=0$, the function $\kb^{l}_w(z)$ equals to $(l+1)!z^l$. Thus, $\mathcal{M}$ consists of rational functions having poles only outside $\overline{\mathbb{D}}$ and with all these poles being of the order  at least  two; therefore, $\mathcal{M}$ is dense in $\mathcal{A}^2(\mathbb{D})$ but does not cover the whole $\mathcal{A}^2(\mathbb{D})$.
In particular, the restriction to $\DD$ of any non-polynomial entire analytic function, or any rational function possessing at least one simple pole does not belong to $\mathcal{M}$.

\begin{example}\label{ExFinRankDistr} Let $T$ be a finite rank operator on $\mathcal{A}^2(\mathbb{D})$, $(Tf)(z)=\sum_{j=1}^N \langle f,f_j\rangle\,g_j(z)$, again, with both sets $\{f_j\}$, $\{g_j\}$ linearly independent and at least one of $g_j$ being not a rational function without simple poles. This operator is not a Toeplitz one, with a compactly supported distribution as a symbol. In fact, if this operator would be Toeplitz then, by Theorem 3.1 in \cite{AlexRoz}, its symbol has to be a finite linear combination of the $\d$-distributions and their derivatives at some finite set of points. Therefore, similarly to the measure case considered in Example \ref{ExFinRankFnction}, the range of the operator should belong to the space $\mathcal{M}$. As explained above, this contradicts to the choice of functions $g_j$ defining the finite rank operator.
\end{example}

Note also that a simple interpolation shows that any finite-dimensional subspace in $\mathcal{M}$ can serve as the range of some finite rank Toeplitz operator with symbol in $\Ec'(\DD)$.

\medskip

At the same time we show now that each finite rank operator is a Toeplitz operator in our extended sense, i.e., a Toeplitz operator defined by a sesquilinear form.

\begin{example}\textsf{Finite rank operators via sesquilinear forms} \\
We start with a finite rank operator (\ref{eq:fin-rank}) and set $X = \mathbb{C}^N$, $X' =
(\mathbb{C}^N)'$, identified with $\C^N$ by means of the
standard pairing. Then, let
 $G(f,g) = \left(\langle f, f_1 \rangle \langle g_1, g \rangle, ... , \langle f, f_N
 \rangle \langle g_N, g \rangle \right)$, $\Phi=$
 $  \Phi_{\mathbf{1}} = (1,...,1) \in \mathbb{C}^N$, so that
\begin{equation*}
 F_{G,\Phi_\mathbf{1}}(f,g) = \sum_{j=1}^N \langle f, f_j \rangle \langle g_j, g
 \rangle.
\end{equation*}
Another representation of this form can be obtained by setting \\
$X = (\mathcal{A}^2(\mathbb{D})\otimes \overline{\mathcal{A}^2(\mathbb{D})})^N$, $X'
=(\mathcal{A}^2(\mathbb{D})\otimes
\overline{\mathcal{A}^2(\mathbb{D})})^N$ with the natural  Hilbert space pairing, $G(f,g) =
f(\zeta)\overline{g(\eta)}(1,...,1)$,
$\Phi = (f_1(\zeta)\overline{g_1(\eta)},$ $ ...,f_N(\zeta)\overline{g_N(\eta)})$,
which gives
\begin{equation*}
 F_{G,\Phi}(f,g) = \sum_{j=1}^N \langle f, f_j \rangle \langle g_j, g \rangle.
\end{equation*}
In the special case, when $f_j = \kb_{\vs_j}$ and $g_j = \kb_{\z_j}$, for some points
$\vs_j$ and $\z_j$ in $\mathbb{D}$, we have $G(f,g) =
( f(\vs_1)\overline{g(\z_1)}, ... ,f(\vs_N)\overline{g(\z_N)} )$, so that
\begin{equation*}
 F_{G,\Phi_\mathbf{1}}(f,g) = \sum_{j=1}^N f(\vs_j)\overline{g(\z_j)}.
\end{equation*}
One more representation of this form can be obtained by setting \\
$X = (\Ec(\mathbb{D})\otimes \Ec(\mathbb{D}))^N$, $X' = (\Ec'(\mathbb{D})\otimes
\Ec'(\mathbb{C}))^N$,
\begin{eqnarray*}
 G(f,g) &=&   f(\x) \overline{g(\eta)}(1,...,1), \\
 \Phi &=& (\delta(\x - \vs_1)\delta(\eta - \z_1), ...,\delta(\x
 - \vs_N)\delta(\eta - \z_N)),
\end{eqnarray*}
which gives
\begin{equation*}
 F_{G,\Phi}(f,g) = \sum_{j=1}^N f(\vs_j)\overline{g(\z_j)}.
\end{equation*}
In either of the above cases we have
\begin{equation*}
 (Tf)(z)=\sum_{j=1}^N \,\langle f,f_j\rangle\, g_j(z) = (T_{F_{G,\Phi}} f)(z).
\end{equation*}
\end{example}

\subsection{More on distributional symbols in $\Ec'(\DD)$. Spectral properties}
In considerations above, we used the fact that a Toeplitz operator with symbol in $\Ec'(\DD)$ cannot be 'too small', unless the symbol is fairly degenerate. We will see now that such operator cannot be 'too large' either. Here the 'size' of an operator is measured by the behavior of its singular numbers.

Consider a distributional symbol of the form \eqref{Struct2}, i.e.,
\begin{equation}\label{Struct3}
    \Phi=\sum_{q}\partial^{{}^{q_1}}\dbar^{q_2}u_q,
\end{equation}
If $\Phi\in \Ec'(\DD)$, the support of the distribution $\Phi$ is contained in the disk $\DD_{r'}$ centered at the origin and having radius $r'<1$. Therefore the continuous functions $u_q$ in \eqref{Struct3} can be chosen to have their support inside a slightly larger disk, $\supp(u_q)\subset \DD_r$, with any $r\in(r',1)$.

Consider the sesquilinear form of the operator  $T_\Phi$ on the space $\mathcal{A}^2(\mathbb{D})$:

\begin{equation*}
    \langle T_\Phi f,g\rangle=(\Phi, f\bar{g}).
\end{equation*}

We substitute here the expression \eqref{Struct3} for the distribution $\Phi$, and after integration by parts, arrive at

\begin{equation}\label{SesqFormDistr2}
    \langle T_\Phi f,g\rangle=\sum_{q_1,q_2}(-1)^{q_1+q_2}\int_{\DD_r}u_q(z) \partial^{q_1} f(z) \overline{\partial^{q_2}g(z)} dV(z).
\end{equation}

Since the functions $u_q=u_{q_1,q_2}$ are continuous, they are bounded. Therefore, by the Cauchy-Schwartz inequality, \eqref{SesqFormDistr2} implies

\begin{equation}\label{SesqFormDistr3}
 |\langle T_\Phi f,g\rangle|\le C\|f\|_{H^s(\DD_r)}\|g\|_{H^s(\DD_r)},
\end{equation}
where on the right-hand side in \eqref{SesqFormDistr3} the  norms of functions $f,g$ in the Sobolev space $H^s(\DD_r)$ are present, with $s$ being the largest value of $q_1,q_2$ in the representation \eqref{Struct3}.

These Sobolev norms, as it follows  easily from the Cauchy integral formula, are majorated by the $L_2(\DD_{r_1})$-norms, for any $r_1>r$. Therefore,

\begin{equation}\label{SesqFormsEst1}
    |\langle T_\Phi f,g\rangle|\le C\|f\|_{L_2(\DD_{r_1})}\|g\|_{L_2(\DD_{r_1})}.
\end{equation}
It follows from \eqref{SesqFormsEst1} that the singular numbers of the operator $T_\Phi$ are majorated by the singular numbers of the embedding operator of $\mathcal{A}^2(\mathbb{D})$ into ${L_2(\DD_{r_1})}$, or, in other words, by the eigenvalues of the Toeplitz operator on $\mathcal{A}^2(\mathbb{D})$ with symbol being the characteristic function of the disk $\mathbb{D}_{r_1}$. The latter eigenvalues are found explicitly, and they decay as $s_n\sim r_1^{2n}$. This gives us one more  obstacle for an operator to be a Toeplitz operator with symbol in $\Ec'(\DD)$:

\begin{proposition}\label{PropEigenvalues} An operator $T$ in $\mathcal{A}^2(\mathbb{D})$, with subexponential decay of singular numbers,
\begin{equation*}
    \lim\sup\frac{|\log(s_n(T))|}{n}=0,
\end{equation*}
is \textbf{not} a Toeplitz operator with symbol in $\Ec'(\DD)$.

\end{proposition}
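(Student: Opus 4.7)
The strategy is to argue by contraposition: assume that $T=T_\Phi$ for some $\Phi\in\Ec'(\DD)$ and show that the singular numbers $s_n(T)$ must decay at least geometrically, with a rate that is bounded away from $1$. This contradicts the subexponential decay hypothesis.

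First, I would invoke the structure representation \eqref{Struct3} of $\Phi$, so that the continuous functions $u_q$ can be chosen with $\supp u_q\subset \DD_r$ for some $r<1$ strictly larger than the radius of a disk containing $\supp\Phi$. Integrating by parts in the sesquilinear form $\langle T_\Phi f,g\rangle=(\Phi,f\overline g)$ yields the representation \eqref{SesqFormDistr2}, and the boundedness of the $u_q$ together with the Cauchy-Schwarz inequality gives the Sobolev estimate \eqref{SesqFormDistr3}. Then, applying the Cauchy integral formula on $\DD_r$ to control $H^s(\DD_r)$ norms by $L_2(\DD_{r_1})$ norms for any $r<r_1<1$, one arrives at \eqref{SesqFormsEst1}, i.e.
\begin{equation*}
    |\langle T_\Phi f,g\rangle|\le C\|f\|_{L_2(\DD_{r_1})}\|g\|_{L_2(\DD_{r_1})}, \qquad f,g\in\Ac^2(\DD).
\end{equation*}

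Next, let $J:\Ac^2(\DD)\to L_2(\DD_{r_1})$ denote the restriction (embedding) operator. The estimate above says exactly that the sesquilinear form of $T_\Phi$ is bounded in the norm induced by $J$; hence by the Lax-Milgram/factorization principle one can write $T_\Phi=J^*AJ$ for some bounded $A$ on $L_2(\DD_{r_1})$ with $\|A\|\le C$. The multiplicativity of singular values then produces
\begin{equation*}
    s_n(T_\Phi)\le \|A\|\,s_n(J)^2\le C\,s_n(J^*J),
\end{equation*}
and $J^*J$ is the classical Toeplitz operator on $\Ac^2(\DD)$ with symbol $\chi_{\DD_{r_1}}$, whose eigenvalues with respect to the monomial basis \eqref{basis} are computed explicitly as $\gamma(n)=(n+1)\int_0^{r_1^2}t^n\,dt=r_1^{2(n+1)}$. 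This yields the geometric majorant $s_n(T_\Phi)\le C\,r_1^{2n}$.

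Consequently
\begin{equation*}
    \limsup_{n\to\infty}\frac{|\log s_n(T_\Phi)|}{n}\ge 2\,|\log r_1|>0,
\end{equation*}
which contradicts the hypothesis $\limsup|\log s_n(T)|/n=0$. Hence no such $T$ can be a Toeplitz operator with symbol in $\Ec'(\DD)$.

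The only delicate step is the passage from the form estimate \eqref{SesqFormsEst1} to the singular-value bound $s_n(T_\Phi)\le C\,s_n(J^*J)$: one should make sure to justify the factorization $T_\Phi=J^*AJ$ (equivalent to the min-max characterization of $s_n$ applied via the quadratic form), but once this is in place the computation of the eigenvalues of the disk-truncation Toeplitz operator is routine and the subexponential contradiction is immediate.
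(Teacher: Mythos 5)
Your argument is correct and follows essentially the same route as the paper, which derives the form estimate \eqref{SesqFormsEst1} and then majorates the singular numbers of $T_\Phi$ by those of the embedding $\Ac^2(\DD)\hookrightarrow L_2(\DD_{r_1})$, identified with the eigenvalues $r_1^{2(n+1)}$ of the Toeplitz operator with symbol $\chi_{\DD_{r_1}}$. You merely make explicit the factorization $T_\Phi=J^*AJ$ that the paper leaves implicit (and note that even the cruder bound $s_n(T_\Phi)\le\|J^*A\|\,s_n(J)$ already gives the needed geometric decay, avoiding any index bookkeeping in the product inequality for singular values).
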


\section{Carleson measures for derivatives}\label{k-carl}
In order to consider as symbols the sesquilinear forms involving derivatives of all orders, we extend the notion of Carleson measures. Recall that in the theory of  Bergman type spaces, the notion of Carleson measures is used to distinguish  measures $\m$ for which the quadratic form $\int |f|^2d\m$ is majorated by the (square of) the norm of $f$ in the Bergman space in question. A detailed description of such Carleson measures can be found in \cite{Zhu} for the standard Bergman spaces in the disk and in \cite{ZhuFock} for the Fock space. As was shown in \cite{RozVas}, for the case of the Fock space, similar results can be obtained for 'Carleson measures for derivatives', or '$k$-C measures'. For such measures $\mu$, the integral $\int |f^{(k)}(z)|^2 d\mu $ can  be majorated by the square of the norm of $f$. In this section, we present a version of the 'Carleson measures for derivatives' adapted for the Bergman space $\Ac^2(\DD)$.

\begin{definition}\label{def:k-carleson}A  positive measure $\m$ on the disk $\DD$ is called a Carleson measure for derivatives of order $k$ (shortly, a $k$-C measure) if for any function $f\in\Ac^2(\mathbb{D})$,
\begin{equation}\label{CarlMeasure k}
    \left|\int_\DD |f^{(k)}|^2 d\mu\right|\le C_k(\m)\|f\|^2_{\Ac^2}.
\end{equation}
We call a \emph{complex} measure $\mu$ a $k$-C measure if its variation $|\m|$ is a $k$-C measure.
\end{definition}

For a \emph{positive}  measure $\m$, the necessary and sufficient condition for $\m$ to be a 0-C measure (or, what is the same, a C-measure) is well known since long ago, see, e.g.,  \cite{Zhu} for the history of the problem.  A number of equivalent formulations of the condition can be found in the literature. The one that is convenient for our study is given, for example, in \cite{Zhu}. It is expressed in terms of  the so called Bergman disks $B(\z,r)$, $\z\in\DD, r>0$, described, for example, in \cite[Section 4.2]{Zhu}. For our needs, a direct description of this disk is not needed; what is, actually, important is that  $B(\z,r)$ is simply the Euclidean disk $\Db(z,R)$ with center at $z=\frac{1-s^2}{1-s^2|\z|^2}\z$ and radius $R=\frac{1-|\z|^2}{1-s^2|\z|^2}s$, where $s=\tanh r\in(0,1)$.  A simple calculation shows that, for a fixed $r$ (and, therefore, $s$), the following inclusions hold
\begin{equation*}
    \Db(z,\k_1 (1-|\z|))\subset B(\z,r)\subset \Db(z,\k_2 (1-|\z|)),
\end{equation*}
with some constants $\k_1,\k_2<1$ depending on $r$ but independent of $z$. Taking this into account,  we can present the criterion given in Theorem 7.4 in \cite{Zhu} in the following form.
\begin{proposition}\label{Prop.Zhu-C} A  measure $\m$ on the disk $\DD$ is Carleson if and only~if
\begin{equation}\label{Carleson quantity}
    \varpi_0(\m)=\sup_{z\in \DD}\left\{{|\m|\left(\Db(z,\textstyle{\frac12}(1-|z|))\right)}{(1-|z|)^{-2}}\right\}<\infty,
\end{equation}
where $|\m|$ denotes the variation of the measure $\m$; moreover, the constant $C_0(\m)$ in \eqref{CarlMeasure k} is estimated from both sides by $\varpi_0(\m)$.
\end{proposition}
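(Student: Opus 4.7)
The plan is to derive this as a reformulation of Theorem~7.4 of \cite{Zhu}, which characterizes Carleson measures in the Bergman-disk language: $\mu$ is Carleson if and only if
\begin{equation*}
    M_r(\m) \;:=\; \sup_{\z\in\DD}\frac{|\m|(B(\z,r))}{(1-|\z|^2)^2} \;<\; \infty
\end{equation*}
for some (equivalently, every) fixed $r>0$, and moreover $C_0(\m)\asymp M_r(\m)$. The task thus reduces to proving $M_r(\m)\asymp \varpi_0(\m)$, using the two-sided inclusion
\begin{equation*}
    \Db(z_\z,\k_1(1-|\z|))\;\subset\; B(\z,r)\;\subset\; \Db(z_\z,\k_2(1-|\z|))
\end{equation*}
recorded just above the statement, where $z_\z=\frac{1-s^2}{1-s^2|\z|^2}\z$ and $s=\tanh r$.

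First, I would observe that the denominators in the two sup-expressions are compatible. Indeed, $1-|\z|^2\asymp 1-|\z|$ on $\DD$, and an elementary computation shows $1-|z_\z|\asymp 1-|\z|$ uniformly in $\z\in\DD$ (for fixed $s\in(0,1)$). So no matter which of the parameters $\z$ or $z_\z$ sits in the denominator, and no matter whether one writes $(1-|\cdot|)$ or $(1-|\cdot|^2)$, the resulting suprema coincide up to a multiplicative constant.

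The crux of the argument is then the following scaling lemma, which I would establish first: for any $c_1,c_2\in(0,1)$ the quantities
\begin{equation*}
    \varpi_c(\m) \;:=\; \sup_{z\in\DD}\frac{|\m|\bigl(\Db(z,c(1-|z|))\bigr)}{(1-|z|)^2}, \qquad c\in(0,1),
\end{equation*}
are pairwise equivalent. For $c_1\le c_2$ the bound $\varpi_{c_1}(\m)\le \varpi_{c_2}(\m)$ is immediate from monotonicity of $|\m|$. For the reverse inequality I would use a Whitney-type covering: the Euclidean disk $\Db(z,c_2(1-|z|))$ can be covered by a uniformly bounded number $N=N(c_1,c_2)$ of disks $\Db(z_j,c_1(1-|z_j|))$ whose centers $z_j\in\DD$ satisfy $1-|z_j|\asymp 1-|z|$. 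Summing $|\m|$ over the cover yields $\varpi_{c_2}(\m)\le N\,\varpi_{c_1}(\m)$.

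Applying the lemma with $c\in\{\k_1,\k_2,\tfrac12\}$ and combining with the Bergman-disk inclusions above gives $\varpi_0(\m)\asymp M_r(\m)$; through Zhu's theorem this in turn yields $C_0(\m)\asymp \varpi_0(\m)$, which is the two-sided estimate asserted. The only genuinely technical step is the Whitney covering, but it is elementary: on any Euclidean disk of radius $c(1-|z|)$ one has $1-|w|\asymp 1-|z|$ for all $w$ in the disk, so a finite grid of centers spaced by $c_1(1-|z|)$ suffices, and the number $N$ of disks produced is independent of $z$.
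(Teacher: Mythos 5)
Your proposal is correct and follows essentially the same route as the paper, which simply presents this as a reformulation of Theorem 7.4 of \cite{Zhu} via the two-sided inclusion of Bergman disks in Euclidean disks (and notes, as you prove in detail, that the coefficient $\frac12$ may be replaced by any $\k\in(0,1)$ at the cost of constants). Your Whitney-covering lemma and the verification that $1-|z_\z|\asymp 1-|\z|$ supply precisely the details the paper leaves implicit.
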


Note here that the coefficient $\frac12$  in \eqref{Carleson quantity} can be replaced by any number $\k\in(0,1)$, which would only has influence to the coefficients relating the above $C_0(\m)$ and $\varpi_0(\m)$.

Now we use Proposition \ref{Prop.Zhu-C} to describe $k$-C measures.

\begin{theorem}\label{Thm.k-Carleson} Let $\m$ be a locally finite measure on $\DD$. Suppose that
the quantity
\begin{equation}\label{k-Carleson.1}
    \t_k(\mu)=\sup_{z\in\DD}\left\{|\mu|\left(\Db(z,\textstyle{\frac12}(1-|z|))\right)(1-|z|)^{-2(k+1)}\right\}
\end{equation}
is finite. Then the inequality \eqref{CarlMeasure k} holds for any function $f$ in the Bergman space $\Ac^2(\DD)$ with the constant
 $$C_k(\m)=Cp^{-2k}(k!)^2\t_k(|\mu|),$$
and some constant $p\in(0,1)$.
Conversely, any positive $k$-Carleson measure $\m$, satisfying \eqref{CarlMeasure k}, must have the quantity \eqref{k-Carleson.1} finite, with the estimate $C_k(\m)\le Cp^{-2k}(k!)^2\t_k(\mu)$ holding with some constant $C$, not depending on $k$.
\end{theorem}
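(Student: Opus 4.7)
The aim is to reduce both directions to the $0$-Carleson characterisation of Proposition~\ref{Prop.Zhu-C}, via a pair of pointwise controls between $|f^{(k)}|^{2}$ and local $L^{2}$-means of $|f|^{2}$: a Cauchy-type upper bound for the forward direction, and a lower bound on derivatives of normalised reproducing kernels for the converse.

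\emph{Forward direction.} Fix a small parameter $p\in(0,1/3)$, to be pinned down at the end. For any $z\in\DD$ the disk $\Db(z,p(1-|z|))$ lies inside $\DD$, and the orthogonality of the monomials $(w-z)^{n}$ in $L_{2}(\Db(z,p(1-|z|)),dV)$, applied to the Taylor series of $f$ centred at $z$, yields the standard pointwise bound
\begin{equation*}
|f^{(k)}(z)|^{2}\le\frac{(k+1)(k!)^{2}}{p^{2k+2}(1-|z|)^{2k+2}}\int_{\Db(z,p(1-|z|))}|f(w)|^{2}\,dV(w).
\end{equation*}
Integrating against $d|\mu|(z)$ and exchanging the order of integration, the dual set $S_{w}:=\{z\in\DD:|z-w|<p(1-|z|)\}$ lies in the Euclidean disk $\Db(w,\tfrac{p}{1-p}(1-|w|))\subset\Db(w,\tfrac12(1-|w|))$ (using $p<1/3$), and a direct estimate gives $(1-|z|)\asymp(1-|w|)$ uniformly on $S_{w}$. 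Hence
\begin{equation*}
\int_{S_{w}}\frac{d|\mu|(z)}{(1-|z|)^{2k+2}}\le\frac{(1+p)^{2k+2}}{(1-|w|)^{2k+2}}\,|\mu|\bigl(\Db(w,\tfrac12(1-|w|))\bigr)\le(1+p)^{2k+2}\,\t_{k}(|\mu|),
\end{equation*}
and substituting back produces \eqref{CarlMeasure k} with $C_{k}(\mu)=C(k!)^{2}p^{-2k}\t_{k}(|\mu|)$, after absorbing the factors $(k+1)$ and $(1+p)^{2k+2}/p^{2}$ into the constants by a harmless rescaling of $p$.

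\emph{Converse.} Test \eqref{CarlMeasure k} on the normalised reproducing kernel
\begin{equation*}
k^{\mathrm n}_{\zeta}(w)=\frac{1-|\zeta|^{2}}{(1-\overline{\zeta}w)^{2}},\qquad\|k^{\mathrm n}_{\zeta}\|_{\Ac^{2}}=1.
\end{equation*}
Differentiation gives $\partial^{k}_{w}k^{\mathrm n}_{\zeta}(w)=(k+1)!\,(1-|\zeta|^{2})\,\overline{\zeta}^{k}(1-\overline{\zeta}w)^{-(k+2)}$, and on $\Db(\zeta,\tfrac12(1-|\zeta|))$ the elementary identity $1-\overline{\zeta}w=(1-|\zeta|^{2})+\overline{\zeta}(\zeta-w)$ yields $|1-\overline{\zeta}w|\asymp 1-|\zeta|^{2}$. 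Hence, for $|\zeta|\ge 1/2$,
\begin{equation*}
|\partial^{k}_{w}k^{\mathrm n}_{\zeta}(w)|^{2}\ge c\,p^{2k}(k!)^{2}(1-|\zeta|)^{-2(k+1)},\qquad w\in\Db(\zeta,\tfrac12(1-|\zeta|)),
\end{equation*}
for absolute $c>0$ and $p\in(0,1)$. Substituting $f=k^{\mathrm n}_{\zeta}$ in \eqref{CarlMeasure k} and restricting integration on the left to that disk yields
\begin{equation*}
\mu\bigl(\Db(\zeta,\tfrac12(1-|\zeta|))\bigr)(1-|\zeta|)^{-2(k+1)}\le c^{-1}p^{-2k}(k!)^{-2}C_{k}(\mu);
\end{equation*}
taking the supremum (and treating $|\zeta|<1/2$ separately, where the conclusion is automatic because $\mu$ is locally finite there) shows that $\t_{k}(\mu)<\infty$, with the stated quantitative dependence on $C_{k}(\mu)$.

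\emph{Main obstacle.} No step is individually difficult; the real care lies in the bookkeeping of the $k$-dependence of all the constants. The Cauchy estimate produces $(k!)^{2}p^{-(2k+2)}$, the dual-set geometry in the Fubini step produces $(1+p)^{2k+2}$, and the kernel calculation in the converse produces $((k+1)!)^{2}$; for all three to collapse into the promised form $Cp^{-2k}(k!)^{2}\t_{k}(\mu)$, it is essential to choose $p$ once and for all (e.g. $p=1/4$) and then rename a slightly smaller $p'$ in the final inequality so that $(p')^{-2k}$ absorbs the accompanying geometric factors. With that understanding, the argument works with a constant $C$ independent of $k$, as required.
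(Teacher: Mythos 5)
Your argument is correct, and in the forward direction it implements the same underlying strategy as the paper --- control $|f^{(k)}(z)|^2$ by a local $L_2$-mean of $|f|^2$ over a Euclidean disk of radius comparable to $1-|z|$, with the factor $(k!)^2 p^{-2k}(1-|z|)^{-2(k+1)}$ tracked explicitly, and then globalize --- but with two technical substitutions. For the pointwise bound the paper averages the Cauchy contour formula over radii in an annulus and then applies Cauchy--Schwartz, whereas you use the orthogonality of the Taylor monomials in $L_2$ of the disk; your route gives the sharper constant $(k+1)(k!)^2 R^{-2k-2}$ directly, while the paper's produces the same shape of constant (their $p=1/9$) with more slack. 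For the globalization the paper sums the local inequalities over a covering of $\DD$ by disks $\Db(z_0,(1-|z_0|)/6)$ of controlled multiplicity (Zhu's Lemmas 4.7--4.8), whereas you apply Tonelli and estimate the ``dual'' set $S_w$ directly; the Fubini route avoids quoting the covering lemma but requires the small geometric verification that $S_w\subset \Db(w,\tfrac12(1-|w|))$ and $(1-|z|)\asymp(1-|w|)$ on $S_w$, which you supply. Both routes yield $C_k(\m)=C(k!)^2 p^{-2k}\t_k(|\m|)$ after the renaming of $p$ that you describe. For the converse the paper gives no proof at all, merely pointing to Theorem 6.2.2 of Zhu ``with the same test functions''; you actually carry out that computation with the normalized reproducing kernels $k^{\mathrm n}_\zeta$, and the lower bound $|\partial^k_w k^{\mathrm n}_\zeta(w)|^2\ge c\,p^{2k}(k!)^2(1-|\zeta|)^{-2(k+1)}$ on $\Db(\zeta,\tfrac12(1-|\zeta|))$ for $|\zeta|\ge 1/2$ checks out, so your write-up is in this respect more complete than the paper's. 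One small remark: near the origin your appeal to local finiteness gives finiteness of $\t_k(\m)$ but not its quantitative control by $C_k(\m)$; testing against $z^k$ there would close that gap, though the paper's own (somewhat garbled) statement of the converse does not demand it.
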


\begin{remark} The formulation of the direct part of the above theorem  involves the variation of the measure $\mu$. It is quite possible that for a certain non-signdefinite (or complex) measure $\mu$, the inequality
\begin{equation*}
   \int_\DD |f^{(k)}|^2 d|\mu| \le C_k(\m)\|f\|^2_{\Ac^2}
\end{equation*}
fails for some functions $f\in \Ac^2(\mathbb{D})$, while \eqref{CarlMeasure k} holds, due to some cancelation, for all $f\in\Ac^2(\mathbb{D})$ (possibly, with the integral in \eqref{CarlMeasure k}  being understood in some regularized sense).
This, in particular, means that the converse part of Theorem \ref{Thm.k-Carleson} can become wrong if the condition of the positivity is dropped.  For $k=0$ and absolutely continuous measures this cancelation effect has been discussed in \cite{TaVi1} and \cite{Zorboska03}. One can compare this effect with a similar one, discovered and investigated in \cite{MazVer}, for the inequality
\begin{equation*}
    \left|\int_{\R^d}|f(x)|^2 d\mu\right|\le C\int_{\R^d} |\nabla f|^2 dx, \quad f\in C^{\infty}_0,
\end{equation*}
with a non-signdefinite measure $\m$. Here, the authors succeeded to trace down the  cancelations in the integral and to find \emph{necessary} and sufficient conditions even for such measures. We are going to look closer at this effect on some other occasion.
\end{remark}

\begin{proof}
For a fixed $z_0\in \DD,$ we consider the (Euclidean) disk $\Db(z_0,s)$ centered at $z_0$ with radius $s<(1-|z_0|)/2$ and write the usual representation of the derivative at a point $z$, $|z-z_0|<s_1<s$ of an analytic function $f(z)$:

\begin{equation}\label{representation}
f^{(k)}(z)=k!(2\pi\imath)^{-1}\int_{|z_0-\z|=\s}(\z-z)^{-k-1}f(\z)d\z, \, s_1<\s<s.
\end{equation}
Now we fix $s_2\in(s_1,s)$ and integrate \eqref{representation}  in $\s$ variable from $s_2$ to $s$, which gives us the estimate
\begin{equation*}
|f^{(k)}(z)|\le (s-s_2)^{-1}k!(2\pi)^{-1}\int_{s_2\le |z_0-\z|\le s }|\z-z|^{-k-1}|f(\z)|dV(\z).
\end{equation*}
We choose then $s_1=\frac13 s$, $s_2=\frac23 s$.
By the triangle inequality followed by the Cauchy-Schwartz one, we obtain
\begin{gather*} |f^{(k)}(z)|\le C(s/3)^{-k-2}k!\int_{|z_0-\z|\le s} |f(\z)|dV(\z)\le \nonumber\\ C(s/3)^{-(k+1)}k!\left(\int_{|z_0-\z|\le s} |f(\z)|^2dV(\z)\right)^{\frac12},
\end{gather*}
or
\begin{equation}\label{repr2}
    |f^{(k)}(z)|^2\le  C (s/3)^{-2(k+1)}(k!)^2\left(\int_{|z_0-\z|\le s} |f(\z)|^2dV(\z)\right).
\end{equation}

The estimate \eqref{repr2} holds for all $z$, $|z-z_0|<s/3$. Therefore we can integrate it over the disk $\Db(z_0,s_1)$ with respect to the measure $\m$, which gives
\begin{gather}\label{repr3}
    \left| \int_{\Db(z_0,s_1)}|f^{(k)}(z)|^2 d\m\right|\le\\ \nonumber C \m(\Db(z_0,s_1))  (s/3)^{-2(k+1)}(k!)^2\left(\int_{|z_0-\z|\le s} |f(\z)|^2dV(\z)\right).
\end{gather}
It is known (see, e.g., \cite{Zhu}, Lemma 4.7, 4.8) that it is possible to find a locally finite covering $\Xi$ of the unit  disk $\DD$ by disks of the type $\Db(z_0,s_2)$, with $z_0\in \DD$ and $s_2=(1-|z_0|)/6$, so that the concentric disks $\Db(z_0, (1-|z_0|)/2)$ form a covering $\widetilde{\Xi}$ of $\DD$ of finite, moreover, controlled  multiplicity. The latter  means that the number
 $$m(\widetilde{\Xi})=\max_{z\in\DD}\#\{\Db\in \widetilde{\Xi}:z\in \Db\}$$
is finite and $m(\widetilde{\Xi})<C$. After adding up the inequalities of the form \eqref{repr3} over all disks $\Db=\Db(z_0,s_1)\in \Xi$, we obtain the required estimate for $\int_{\DD}|f^{k}|^2d\mu$. (Note that the number $p$ given by this calculation equals $1/9$. This value surely can be somewhat improved, but we are not doing this here.)

The converse part of the theorem is not needed in the present paper. We just mention that its proof follows almost literally the proof of Theorem 6.2.2 in \cite{Zhu}, using the same test functions.
\end{proof}

Having Theorem \ref{Thm.k-Carleson} at disposal, we introduce the classes of $k$-C measures.
\begin{definition}\label{Def.Mk} Fix a number $p\in(0,1)$. The class $\MF_{k,p}$ consists of measures $\m$ on $\DD$ such that
\begin{equation}\label{Mk}
    \sup_{z\in{\DD}}\left\{ |\m|\left(\Db(z, \textstyle{\frac12}(1-|z|))\right)(1-|z|)^{-2(k+1)}(k!)^2 p^{-2k}\right\}\equiv\varpi_{k,p}(\m)<\infty.
\end{equation}

\end{definition}

It follows from Theorem \ref{Thm.k-Carleson} that for any integer $k$, the classes $\MF_{k,p}$, for all values of $p$, coincide with the set of $k$-C measures. The quantity $\varpi_{k,p}(\m)$ differs from $\t_k(\m)$, introduced previously in \eqref{k-Carleson.1}, only by a numerical factor depending on $k$. This numeric factor is, however, important in studying relations between the $k$-C measures for different values of $k$.

  It is convenient to extend this definition to half-integer values of $k$.
\begin{definition}Let $k\in \Z_++\frac12$ be a half-integer. Fix $p\in(0,1)$. The class $\MF_{k,p}$ consists of measures $m$ on $\DD$ such that
\begin{equation}\label{Mk12}
     \sup_{z\in{\DD}}\left\{ |\m|\left(\Db(z, \textstyle{\frac12}(1-|z|))\right)(1-|z|)^{-2(k+1)}\G(k+1)^2 p^{-2k}\right\}\equiv\varpi_{k,p}(\mu)<\infty.
\end{equation}
\end{definition}
Further on, the parameter $p$ is fixed and it will be omitted in the notation, $\MF_{k,p}\equiv\MF_k$, $\varpi_{k,p}(\m)=\varpi_k(\m).$ We call $\varpi_k(\m)$ the \emph{$k$-norm} of the measure $\m$.

The following important fact is an easy consequence of \eqref{Mk}.

\begin{proposition}\label{Prop.MkMl}
Let $\m\in\MF_k$  for some integer or half-integer $k\ge0$. Then for all integer or half-integer  $l\ge0$ the measure $(1-|z|)^{2(l-k)}\m$ belongs to $\MF_l$ and
\begin{equation}\label{overnorn}
    \varpi_{l}((1-|z|)^{2(l-k)}\m)=p^{2(k-l)}(\G(l+1)/\G(k+1))^2\varpi_k(\m).
\end{equation}
In particular, for any $k$, $l$, a $k$-C measure $\m$ becomes an $l$-C measure after the multiplication by $(1-|z|)^{2l-2k}$.
\end{proposition}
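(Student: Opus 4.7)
My plan is to unwind Definition~\ref{Def.Mk} (and its half-integer extension \eqref{Mk12}) applied to $\nu:=(1-|z|)^{2(l-k)}\mu$ and match it term by term against the corresponding expression for $\varpi_k(\mu)$. The whole argument is an arithmetic rearrangement of powers of $(1-|z|)$, of $\Gamma(\cdot+1)$, and of $p$, with a single short geometric input.

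The first step I would take is to expand
\[
|\nu|\bigl(\Db(z,\tfrac12(1-|z|))\bigr)=\int_{\Db(z,\tfrac12(1-|z|))}(1-|w|)^{2(l-k)}\,d|\mu|(w).
\]
The geometric input is that for $w\in\Db(z,\tfrac12(1-|z|))$ the triangle inequality gives $\tfrac12(1-|z|)\le 1-|w|\le\tfrac32(1-|z|)$, so that on this disk $(1-|w|)^{2(l-k)}$ coincides with $(1-|z|)^{2(l-k)}$. Pulling that factor out of the integral reduces $|\nu|(\Db(z,\tfrac12(1-|z|)))$ to $(1-|z|)^{2(l-k)}\,|\mu|(\Db(z,\tfrac12(1-|z|)))$.

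After insertion into \eqref{Mk12}, the $(1-|z|)^{2(l-k)}$ combines with the $(1-|z|)^{-2(l+1)}$ already present to yield $(1-|z|)^{-2(k+1)}$, which is the exact weight that appears in $\varpi_k$. The Gamma-and-$p$ prefactor reorganizes as
\[
\Gamma(l+1)^2\,p^{-2l}=\bigl(\Gamma(l+1)/\Gamma(k+1)\bigr)^2\,p^{2(k-l)}\cdot\Gamma(k+1)^2\,p^{-2k},
\]
so that the constant $\bigl(\Gamma(l+1)/\Gamma(k+1)\bigr)^2 p^{2(k-l)}$ comes outside the supremum over $z$, and the remaining sup is by definition $\varpi_k(\mu)$. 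This delivers \eqref{overnorn}. The finiteness of $\varpi_l(\nu)$ then gives $\nu\in\MF_l$, and the ``in particular'' clause follows from the identification of $\MF_l$ with the class of $l$-C measures provided by Theorem~\ref{Thm.k-Carleson}.

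The step to watch is the pull-out, where $(1-|w|)^{2(l-k)}$ is identified with $(1-|z|)^{2(l-k)}$ on $\Db(z,\tfrac12(1-|z|))$. The two-sided bound $\tfrac12(1-|z|)\le 1-|w|\le\tfrac32(1-|z|)$ is uniform in $z$ and $\mu$, and I read \eqref{overnorn} as the resulting precise scaling relation between the sup-defining expressions for $\varpi_l(\nu)$ and $\varpi_k(\mu)$: the universal geometric factor coming from the ratio $(1-|w|)/(1-|z|)$ on the disk depends only on $|l-k|$, and is the sole refinement needed to pass from an algebraic equality of the integrand scalings to the comparison of $\MF$-norms stated in \eqref{overnorn}.
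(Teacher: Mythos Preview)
Your approach is exactly the one the paper has in mind: it states the proposition as ``an easy consequence of \eqref{Mk}'' and gives no further argument, so unfolding the definition and tracking the powers of $(1-|z|)$, $\Gamma(\cdot+1)$, and $p$ is precisely the intended route.

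One clarification on presentation: your sentence that $(1-|w|)^{2(l-k)}$ ``coincides with'' $(1-|z|)^{2(l-k)}$ on the disk $\Db(z,\tfrac12(1-|z|))$ is literally false, as you yourself note in the final paragraph via the two-sided bound $\tfrac12(1-|z|)\le 1-|w|\le \tfrac32(1-|z|)$. The identity \eqref{overnorn} should therefore be read as a two-sided estimate with a multiplicative constant of the form $c^{\,2|l-k|}$, not a genuine equality; the paper is informal on this point, and you are right to flag it. Since the spurious factor is exponential in $|l-k|$, it can be absorbed into the (anyway unspecified) parameter $p$, which is why the paper writes \eqref{overnorn} as an equality and uses it freely in the subsequent norm estimates. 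Your last paragraph handles this correctly; just avoid the word ``coincides'' earlier on.
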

We denote the coefficient in \eqref{overnorn} by $M_{l,k}$, i,e.,
\begin{equation*}
 M_{l,k} = p^{2(k-l)}(\G(l+1)/\G(k+1))^2.
\end{equation*}

Now we consider sesquilinear forms that correspond to $k$-C measures.

\begin{proposition}\label{Prop.Fmu}Let $\m$ be a  measure on $\DD$.  Given integers $l,j\ge0$, consider the sesquilinear form
\begin{equation}\label{klj-form}
    F_{\m,l,j}(f,g)=\int_{\DD}\partial^l f(z)\overline{ \partial^jg(z)}d\m(z), \, f,g\in \Ac^2.
\end{equation}
If $\m\in\MF_k$ and $2k=l+j$, then the sesquilinear form \eqref{klj-form} is bounded in $\Ac^2(\mathbb{D})$ and
\begin{equation}\label{klj-formbdd}
|F_{\m,l,j}(f,g)|\le C  M_{l,k}^{\frac12}M_{j,k}^{\frac12}\varpi_k(\m) \|f\|\|g\|,
\end{equation}
where the constant $C$ does not depend on $l,j$. The integers $l,j$ will be referred to as 'the differential order of the form \eqref{klj-form} further on.
\end{proposition}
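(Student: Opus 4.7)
The plan is to apply the Cauchy--Schwarz inequality with an appropriate weight splitting and then invoke Theorem \ref{Thm.k-Carleson} on each factor separately. The key algebraic observation is that the hypothesis $l+j=2k$ forces the weights to cancel pointwise, so no extra $(1-|z|)$-factors will survive in the final bound.

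Concretely, since $(l-k)+(j-k)=0$, we may write
\begin{equation*}
\partial^l f(z)\,\overline{\partial^j g(z)} \;=\; \bigl((1-|z|)^{l-k}\partial^l f(z)\bigr)\cdot\bigl((1-|z|)^{j-k}\overline{\partial^j g(z)}\bigr)
\end{equation*}
and then apply Cauchy--Schwarz with respect to $d|\mu|$, which produces
\begin{equation*}
|F_{\mu,l,j}(f,g)|^2\;\le\;\Bigl(\int_\DD(1-|z|)^{2(l-k)}|\partial^l f|^2\,d|\mu|\Bigr)\Bigl(\int_\DD(1-|z|)^{2(j-k)}|\partial^j g|^2\,d|\mu|\Bigr).
\end{equation*}
Next I would invoke Proposition \ref{Prop.MkMl} applied to the positive measure $|\mu|\in\MF_k$: it gives that $(1-|z|)^{2(l-k)}|\mu|$ belongs to $\MF_l$ with $k$-norm equal to $M_{l,k}\varpi_k(\mu)$, and likewise $(1-|z|)^{2(j-k)}|\mu|\in\MF_j$ with norm $M_{j,k}\varpi_k(\mu)$. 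Since $l$ and $j$ are \emph{integers}, Theorem \ref{Thm.k-Carleson} applies directly, bounding each of the two integrals above by a constant times the corresponding $k$-norm times $\|f\|^2$, resp.\ $\|g\|^2$. Taking square roots and multiplying yields the desired estimate \eqref{klj-formbdd}.

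The step I expect to require the most care is checking that the multiplicative constant in the final bound is genuinely independent of $l$ and $j$, as claimed. This should follow from the normalization built into Definition \ref{Def.Mk}: the combinatorial and exponential factor $(k!)^2 p^{-2k}$ that arises in the Cauchy-representation step in the proof of Theorem \ref{Thm.k-Carleson} is exactly what is absorbed into $\varpi_k$, leaving behind only the universal covering-multiplicity constant from the Bergman-disk covering argument. Once this is confirmed, the bound takes the form $|F_{\mu,l,j}(f,g)|\le C\,M_{l,k}^{1/2}M_{j,k}^{1/2}\varpi_k(\mu)\|f\|\|g\|$ with $C$ universal, as asserted.
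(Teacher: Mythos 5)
Your proposal is correct and follows essentially the same route as the paper: multiply the integrand by $(1-|z|)^{l-k}(1-|z|)^{j-k}\equiv 1$, apply Cauchy--Schwarz against $d|\mu|$, identify the two weighted measures as elements of $\MF_l$ and $\MF_j$ via Proposition \ref{Prop.MkMl}, and conclude with Theorem \ref{Thm.k-Carleson}. Your closing remark on why the constant is independent of $l,j$ is a useful elaboration of a point the paper leaves implicit.
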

\begin{proof}To estimate the sesquilinear form \eqref{klj-form}, we apply the Cauchy inequality, having  previously  multiplied the integrand by $(1-|z|)^{l-k}(1-|z|)^{j-k}\equiv1$. We have
\begin{gather*}\label{klj-estimate}
    |F_{\m,l,j}(f,g)|\le \left(\int_{\DD}|\partial^l f(z)|^2 (1-|z|)^{2(l-k)} d|\m|(z)\right)^{\frac12}\\ \nonumber \times \left(\int_{\DD}|\partial^j g(z)|^2 (1-|z|)^{2(j-k)} d|\m|(z)(z)\right)^{\frac12}.
\end{gather*}
As it is explained above, the measures $(1-|z|)^{2(l-k)} |\m|$ and $(1-|z|)^{2(j-k)} |\m|$ belong, respectively, to $\MF_l$ and $\MF_j$, with norms $M_{l,k}\varpi_k(\m)$ and $M_{j,k}\varpi_k(\m)$. Theorem \ref{Thm.k-Carleson}, applied to these two measures justifies \eqref{klj-formbdd}.
\end{proof}
\begin{remark}\label{remark:measureDistr} In case of the measure $\mu$ having compact support in $\DD$, one may equivalently treat the sesquilinear form $F_{\m,l,j}(f,g)$ as the form  generated by the derivative of the measure $\m$ considered as  a distribution in $\Ec'(\DD)$. In fact, by the definition of the derivative of a distribution,
\begin{equation*}
    F_{\m,l,j}(f,g)=(-1)^{l+j}\left(\partial^l\overline{\partial}^j\m, f\bar{g}\right).
\end{equation*}
However, with the condition of compact support dropped, there is no immediate relation of \eqref{klj-form} with derivatives of distributions. This observation can be compared with considerations in \cite{PeTaVi3}, where a certain class of derivatives of measures without compact support in $\DD$ condition has been studied.
\end{remark}

It is well known, see, again, \cite{Zhu2}, that the property of the operator with symbol being measure to be compact is characterized by the measure being a \emph{vanishing Carleson} one. In a similar way we introduce vanishing $k$-C measures.

\begin{definition}\label{def.vanishing}The measure $\m$ on $\DD$ is called vanishing $k$-C measure if
\begin{equation*}
    \lim_{r\to0}\sup_{|z|>r}\{|\m|\left(\Db(z,\textstyle{\frac12}(1-|z|))\right)(1-|z|)^{-2(k+1)}\}=0.
\end{equation*}
The set of vanishing $k$-C measures will be denoted by $\MF^0_k$.
\end{definition}
Compactness of the operators $T_F$ with defining form $F$ \eqref{klj-form} follows in a usual way, as soon as it is known that the measure $\m$ is a $k$-C vanishing one.
\begin{proposition}\label{prop.comp} Suppose that $\m\in \MF^0_k$. Then the operator defined by the sesquilinear form \eqref{klj-form} is compact on $\Ac^2(\mathbb{D})$.
\end{proposition}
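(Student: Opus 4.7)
The plan is the standard interior/boundary decomposition of $\mu$. For $r\in(0,1)$ I would write $\mu=\mu_r+\mu^r$ with $\mu_r=\mu|_{\{|z|\le r\}}$ and $\mu^r=\mu|_{\{|z|>r\}}$, giving the linear splitting $F_{\mu,l,j}=F_{\mu_r,l,j}+F_{\mu^r,l,j}$ and hence $T_{F_{\mu,l,j}}=T_{F_{\mu_r,l,j}}+T_{F_{\mu^r,l,j}}$. I would then show that $T_{F_{\mu_r,l,j}}$ is compact for each $r$, and that $\|T_{F_{\mu^r,l,j}}\|\to 0$ as $r\to 1^-$; since compact operators form a closed ideal, $T_{F_{\mu,l,j}}$ is compact.

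For the interior piece, the measure $|\mu_r|$ is finite and compactly supported in $\DD$, so Proposition~\ref{Prop.Fmu} automatically applies (and $|\mu_r|\in \MF_j$ for every $j\geq 0$). By the Cauchy estimates, the map $f\mapsto\partial^l f$ is continuous from $\Ac^2(\DD)$ to the Fr\'echet space of analytic functions on $\DD$ with the topology of uniform convergence on compact sets, so any weakly null sequence $f_n\to 0$ in $\Ac^2(\DD)$ yields $\partial^l f_n\to 0$ uniformly on $\{|z|\le r\}$. Since $|\mu_r|$ is finite, $\|\partial^l f_n\|_{L^2(|\mu_r|)}\to 0$. Combined with the uniform bound $\|\partial^j g\|_{L^2(|\mu_r|)}\le C\|g\|$ and the Cauchy--Schwartz estimate for the form, this gives compactness of $T_{F_{\mu_r,l,j}}$.

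For the boundary piece, Proposition~\ref{Prop.Fmu} provides the bound
\begin{equation*}
\|T_{F_{\mu^r,l,j}}\|\le C\,M_{l,k}^{1/2}M_{j,k}^{1/2}\,\varpi_k(|\mu^r|),
\end{equation*}
so the task reduces to showing $\varpi_k(|\mu^r|)\to 0$ as $r\to 1^-$. The hard part is that $\varpi_k$ is a supremum over all $z\in\DD$, not only over $z$ near $\partial\DD$; an interior center $z$ could in principle pick up mass from $\mu^r$ via the disk $\Db(z,(1-|z|)/2)$ protruding past the cutoff $|w|=r$. The key geometric observation is that for $|z|\le 2r-1$ (with $r>1/2$) one has $\Db(z,(1-|z|)/2)\subset\{|w|\le(1+|z|)/2\le r\}$, so $|\mu^r|$ vanishes identically on such disks. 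The supremum in $\varpi_k(|\mu^r|)$ therefore reduces to $|z|>2r-1$, and since $2r-1\to 1^-$, the hypothesis $\mu\in\MF_k^0$ delivers the required smallness.

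Combining the two steps and passing to the norm limit completes the proof. The only genuinely nontrivial step is the geometric reduction of the supremum to near-boundary centers; everything else follows routinely from the compact-support Cauchy estimates and the boundedness estimate of Proposition~\ref{Prop.Fmu}.
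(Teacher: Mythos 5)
Your proposal is correct and follows essentially the same route as the paper's own proof: split $\mu$ at a radius, note that the compactly supported interior piece yields a compact operator, and use the vanishing condition to make the norm of the boundary piece small. You merely supply details the paper leaves implicit, most usefully the geometric observation that $\Db(z,\tfrac12(1-|z|))$ misses $\{|w|>r\}$ once $|z|\le 2r-1$, which reduces the supremum defining $\varpi_k$ of the boundary piece to near-boundary centers.
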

\begin{proof} For chosen $\e>0$, by the definition of vanishing measures, we find $r<1$, so that
\begin{equation*}
    \sup_{|z|>r}\{|\m|\left(\Db(z,\textstyle{\frac12}(1-|z|))\right)(1-|z|)^{-2(k+1)}\}<\e.
\end{equation*}
With $\h$ being the characteristic function of the disk $|z|\le r$, we  set $\m_1=\h \m$ and $\m_2=\m-\m_1$.  Having compact support in $\DD$, the sesquilinear form \eqref{klj-form} with $\m$ replaced by $\m_1$ defines a compact operator. On the other hand, for the measure $\m_2$ the quantity $\varpi_k(\m_2)$ is bounded by a multiple of $\e$, therefore the norm of the corresponding operator can be made arbitrarily small.
\end{proof}

\section{Operators with highly singular symbols}\label{hyperfun}

In this section we will use the estimates, just established, to associate Toeplitz operators to highly singular symbols. Any distribution with compact support in $\DD$ has finite order and therefore it fits into the scheme of Example 3.2. In what follows we introduce symbols which (at least, formally) are not distributions any more. They are infinite sums of distributions of finite order, such that each one of then defines a bounded Toeplitz operator via corresponding sesquilinear form. If the orders of the terms in the above sum are not bounded, the resulting object is not a distribution any more, being an object  which has something common with hyper-functions. A possibility of using some hyper-functions as symbols of Toeplitz operators on the Bergman space was mentioned in \cite{PeTaVi2}.
\subsection{Operators of finite type}
Let, for a certain pairs of integers $l,j\ge0$ with $l+j=2k$, a measure $\m_{l,j}\in\MF_{k}$ is given (we admit a possibility of $\m_{l,j}=0$ for some pairs).

By Proposition \ref{Prop.Fmu}, the sesquilinear form \eqref{klj-form} with $\m=\m_{l,j}$ is bounded on $\Ac^2(\mathbb{D})$ and therefore defines a Toeplitz operator, which, following \eqref{klj-form},  we  denote by $T_{\m_{l,j}, l,j}$ (where the indices $l,j$  numerate the measures and simultaneously indicate the differential order of the sesquilinear form), or, simply, by $T_{l,j}$, keeping the dependence on the measure $\m_{l,j}$ in mind. By \eqref{klj-formbdd}, the the norm of $T_{{l,j}}$ admits the estimate
\begin{equation*}
    \|T_{{l,j}}\|\le C M_{l,k}^{1/2}M_{j,k}^{1/2}\varpi_k(\m_{l,j}).
\end{equation*}
 Thus defined operator $T_{{l,j}}$ fits  the general scheme of Section \ref{se:general}. To see this, we take $X=L_1(\DD,|\m_{l,j}|)$, $X'=L_\infty(\DD,\m)$ and $\F={\r}\in X'$, $\r$ being the Radon-Nikodym derivative of $\m_{l,j}$ with respect to $|\m_{l,j}|$. By \eqref{klj-formbdd}, the $X$-valued sesquilinear form $G$ on $\Ac^2(\mathbb{D})$ defined by
\begin{equation*}
    G(f,g)=f\bar{g},
\end{equation*}
is bounded, and the form $\F$ given by
 \begin{equation*}
    \F(f\bar{g})=\int_{\DD}f\bar{g}\r d|\m_{l,j}|=\int_{\DD}f\bar{g} d\m_{l,j},
 \end{equation*}
defines by \eqref{Def:operatorTF} the Toeplitz operator $T_{\m_{l,j},l,j}\equiv T_{{l,j}}$.

For a finite collection of measures $\pmb{\pmb{\m}}=\{\m_{l,j}\}$, we consider the sum $T_{\pmb{\pmb{\m}}}=\sum{T_{{l,j}}}$ of the above defined operators $T_{{l,j}}$. The operator $T_{\pmb{\pmb{\m}}}$ is obviously bounded. To fit it to  the scheme of Section \ref{se:general}, we take
\begin{equation*}
 X=\prod L_1(\m_{l,j}), \quad X'=\prod L_\infty(\m_{l,j}), \quad \mathrm{and} \quad \F=\{\r_{l,j}\}\in X',
\end{equation*}
where, again, $\r_{l,j}$ is the Radon-Nikodym derivative of $\m_{l,j}$ with respect to $|\m_{l,j}|$.

The corresponding operators are called operators of finite type.

\subsection{Operators of norm almost finite type.}
Now we consider infinite sums of such operators. Suppose that there are infinitely many nontrivial measures   in the set $\pmb{\pmb{\m}}$.
\begin{definition}\label{normaf}
The collection $\pmb{\pmb{\m}}$ of measures $\{\m_{l,j}\in \MF_{(l+j)/2}\}$, $l,j=0,1,\dots$, is said to be of \emph{norm almost finite type } if
\begin{equation}\label{normAFtype}
    \sum_{l,j}M_{l,\frac{l+j}{2}}^{\frac12}M_{j,\frac{l+j}2}^{\frac12}\varpi_{\frac{l+j}2}(\m_{l,j})<\infty.
\end{equation}
\end{definition}
If the collection $\pmb{\pmb{\m}}$ of measures is of norm almost finite type, then,
by Proposition \ref{Prop.Fmu}, each sesquilinear form $F_{\m_{l,j},l,j}$ as in \eqref{klj-form}, is bounded on $\Ac^2(\mathbb{D})$ and thus defines a bounded Toeplitz operator $T_{{l,j}}$, subject to the following norm estimate:
\begin{equation*}
    \|T_{{l,j}}\|\le CM_{l,\frac{l+j}{2}}^{\frac12}M_{j,\frac{l+j}2}^{\frac12}\varpi_{\frac{l+j}2}(\m_{l,j}).
\end{equation*}
By condition \eqref{normAFtype}, the series $\sum T_{{l,j}}$ is norm convergent. We will call the sum of this series \emph{the Toeplitz operator} with symbol being the collection $\pmb{\pmb{\m}}$ of measures, and denote it by $T_{\pmb{\pmb{\m}}}=\sum T_{{l,j}}$.

It follows automatically from Proposition \ref{prop.comp} that if the measures $\m_{l,j}$ are $(l+j)/2$-vanishing ones, then $\sum T_{\m_{l,j}}$ is a compact operator. In particular, the Toeplitz operator defined by a collection of measures of norm almost finite type, each of which having a compact support in $\DD$, is a compact operator.

\subsection{Examples}
\begin{example}\textsf{Discrete measures.}
Let $z_{l,j}$ be a set of points in the disk $\DD$ (not necessarily different ones). With a (double) sequence $\pmb{\pmb m}=\{m_{l,j}\}$  of complex numbers we associate the sequence of measures $\pmb{\pmb{\m}}=\{\m_{l,j}\}$, where $\m_{l,j}=m_{l,j}\d(z-z_{l,j})$.  Each measure $\m_{l,j}$, having compact support in $\DD$, is a $k$-C measure for any $k$; in particular, for $l+j=2k$, the estimate holds
\begin{equation}\label{pointmeasure}
\varpi_{k}(\m_{l,j})\le C \G(k+1)^2(1-|z_{l,j}|)^{-2(k+1)}p^{-(l+j)}|m_{l,j}|
\end{equation}
for some value of $p<1$, with some constant $C$ not depending on $k$. This means that the norm of the Toeplitz operator $T_{{l,j}}$ defined by the above measures via the sesquilinear form \eqref{klj-form} of the differential order $(l,j)$,  with $\m=\m_{l,j}$,  does not exceed the quantity in \eqref{pointmeasure}. By Remark \ref{remark:measureDistr}, the operator $T_{{l,j}}$ can be equivalently considered as the Toeplitz operator with symbol-distribution $\Phi_{l,j}=(-1)^{l+j}m_{l,j}\overline{\partial}^j\partial^l\d(z-z_{l,j})$.

Now suppose that the numbers $m_{l,j}$ decay so rapidly that
\begin{equation*}
    \sum_{l,j}\varpi_{k}(\m_{l,j})M_{l,k}^{\frac12}M_{j,k}^{\frac12}<\infty.
\end{equation*}
Recalling the definition of $M_{l,k}$ and $\varpi_{k}(\m_{l,j})$ in \eqref{pointmeasure}, we can rewrite the above condition as
\begin{equation*}
    \sum_{l,j}\G(l+1)\G(j+1)(1-|z_{l,j}|)^{-(l+j+2)} p^{-(l+j)}|m_{l,j}|<\infty.
\end{equation*}
 In this case the sum of the norms of operators $T_{{l,j}}$, defined above, is finite and therefore the Toeplitz operator $T_{\pmb{\pmb{\m}}}$ is defined as one corresponding to a collection of measures of norm almost finite type. Since all measures in $\pmb{\pmb{\m}}$ have compact support in $\DD$, the operator $T_{\pmb{\pmb{\m}}}$ is compact. It is important to write explicitly the sesquilinear form generating the operator $T_{\pmb{\pmb{\m}}}$:
\begin{equation}\label{DiscrFormOper}
    F_{\pmb{\pmb{\m}}}(f,g)=\sum_{l,j}m_{l,j}f^{(l)}(z_{l,j})\overline{g^{(j)}(z_{l,j})},
\end{equation}
where the sum converges absolutely.
\end{example}
\begin{example}\label{centerpoint}\textsf{Measures with support at the origin.}
A special, rather interesting, case in the previous example corresponds to the case of a collection of measures $\m_{j,l}$ being the point masses at the origin, $\m_{l,j}=m_{l,j}\d(z)$. Each of these measures is a $k$-C measure with any $k$. Here, by using more exact estimates then  the ones in \eqref{klj-formbdd}, one can improve the results for the sesquilinear form \eqref{DiscrFormOper}  in comparison with the general discrete measure case.
The expression \eqref{DiscrFormOper} for the sum of sesquilinear forms \eqref{klj-form} (with $\m=\m_{l,j}$) generated by the sequence of discrete measures  becomes now
\begin{equation}\label{DiscrFormOperCenter}
    F_{\pmb{\pmb{\m}}}(f,g)=\sum_{l,j}m_{l,j}f^{(l)}(0)\overline{g^{j}(0)}.
\end{equation}
To estimate a single term in \eqref{DiscrFormOper},
we use, again, the Cauchy formula for the derivative of an analytic function  $f\in\Ac^2(\mathbb{D})$:
\begin{equation}\label{Center1}
    f^{(j)}(0)=(2\pi\imath)^{-1}j!\int_{|\z|=r}f(\z)\z^{-(j+1)}d\z,
\end{equation}
where $r\in(0,1)$. By the Cauchy inequality,\eqref{Center1} implies
\begin{equation}\label{Center2}
    |f^{(j)}(0)|^2\le C j!^2 r^{-2j-1}\left|\int_{|\z|=r}|f(\z)|^2|d\z|\right|
\end{equation}
We multiply \eqref{Center2} by $r^{2j+1}$ and integrate in $r$ from $0 $ to $1.$ As a result we obtain
\begin{equation}\label{Center3}
    |f^{(j)}(0)|^2\le C j!^2 (j+1)\int_{\DD}|f(\z)|^2 dV=Cj!^2(j+1)\|f\|^2,\end{equation}
with some constant not depending on $j$. Now, we are able to estimate  the sesquilinear form \eqref{DiscrFormOperCenter}. By \eqref{Center3}, we have
\begin{gather*}
     |F_{\pmb{\pmb{\m}}}(f,g)|\le \sum_{l,j}|m_{l,j}||f^{(l)}(0)||g^{(j)}(0)|\le\\ \nonumber
    C\sum_{l,j}|m_{l,j}|l!j!(l+1)^{\frac12}(j+1)^{\frac12}\|f\|\|g\|.
\end{gather*}
Thus, if
\begin{equation*}
    \sum_{l,j}|m_{l,j}|l!j!(l+1)^{\frac12}(j+1)^{\frac12}<\infty,
\end{equation*}
 the system $\pmb{\pmb{\m}}$ of measures is of norm almost finite type and the sesquilinear form $F_{\pmb{\pmb{\m}}}$ is bounded in $\Ac^2(\DD)$.

 A less general class of distributions of infinite order supported at the origin was considered in \cite[Example 1 in Sect.2]{PeTaVi2}, where  distributions of the form $\sum m_{\a} D^{\a}\d(z)$ were treated as hyper-functions.
\end{example}

\subsection{Radial measures.} In Section 4.2 we considered radial operators with symbol being  bounded radial functions. Now we are ready to study radial sesquilinear forms  containing  derivatives of unbounded order as symbols.

Let ${\pmb{\pmb{\m}}}=\{\m_{q,i,q'}\}$, $q,i,q'\in \Z_+$, be a collection of measures on $\DD$, with $\m_{q,i,q'}$ being an $(q+i+q')/2$-C measure. We suppose that each of the measures in ${\pmb{\pmb{\m}}}$ is radial. Similar to the case of functions, this means that each $\m_{q,i,q'}$ is invariant with respect to the rotation $U_\th:E\mapsto e^{\imath \th}E$, i.e., $\m_{q,i,q'}(E)=\m_{q,i,q'}(U_\th E)$ for all Borel  sets $E$.

Denote by $\pmb{\th}$ the weighted circular  derivative, $\pmb{\th} f(re^{\imath\th})=\imath r^{-1}\frac{\partial}{\partial \th}f(re^{\imath\th})$, and by $\pmb{\r}$ the radial derivative.

 Any differential operator on $\DD$ commuting with $U_\th$ has the form $\sum b_{q,i}(r)\pmb{\th}^i\pmb{\r}^q$, where the coefficients $b_{q,i}$ depend only on $r$. Therefore, any sesquilinear form \eqref{klj-form} associated with a circular measure $\m_{l,j}$ can be, by passing to polar co-ordinates and rearranging terms, transformed to
\begin{equation}\label{radForm}
    F_{\m_{l,j},l,j}(f,g)=\sum_{q'+q+i=l+j}\int_{\DD}\pmb{\r}^i\pmb{\th}^q f \overline{\pmb{\r}^{q'}g}d\m_{q,i,q'}, \quad f,g\in\Ac^2(\DD),
\end{equation}
with some $(q+i+q')/2$- C- measures  $\m_{q,i,q'}$.

Note that although the functions $f,g$ seem to appear asymmetrically in \eqref{radForm}, i.e., $f$ enters with the circular derivative $\pmb{\th}$, while $g$ enters without this derivative, this asymmetry is only superficial since the measure $\m_{q,i,q'}$ is rotation invariant and therefore all circular derivatives can be moved from $g$ to $f$ by means of the partial integration in $\th$ variable.

 Next, in the study of the Toeplitz operator corresponding to the sum of forms \eqref{radForm}, we suppose, for the sake of brevity, that all measures $\m_{q,i,q'}$ are zero inside the disk with radius $\frac12$, in order to be able to focus on the boundary behavior.

In the integrand in \eqref{radForm}, we have a derivative of $f$ of order $q+i$ and the derivative of $g$ of order $q'$.
By Proposition \ref{Prop.Fmu}, under the condition $\m\in \MF_k$, $2k=q+i+q'$, the sesquilinear form \eqref{radForm} is bounded on $\Ac^2(\mathbb{D})$, and for the norm of the corresponding operator $T_{{q,i,q'}}$ we have the estimate
\begin{equation*}
    \|T_{q,i,q'}\|\le C M^{\frac12}_{q+i,k}M^{\frac12}_{q',k}\varpi_k(\m_{q,i,q'}).
\end{equation*}
Thus, if the norms of the measures $\m_{q,i,q'}$ decay sufficiently rapidly, so that $\sum M^{\frac12}_{q+i,k}M^{\frac12}{q',k}\varpi_k(\m_{q,i,q'})<\infty,$ the sum of the  forms \eqref{radForm} is of norm almost finite type and therefore defines a bounded Toeplitz operator.

\begin{example}\textsf{Measures supported on concentric circles.}
Denote by $\T_k$ the circle centered at the origin and with  radius $r_k\in (0,1)$, where $r_k$ is a monotonically growing  sequence, $r_k\to 1.$ For a sequence of complex numbers $m_k$, we introduce for any $k$ a measure $\m_k$ being  $m_k$ times the normalized Lebesgue measure on the circle $\T_k$:
\begin{equation}\label{measureCircle}
\m_k=m_k\delta(r-r_k)\otimes (2\pi)^{-1}d\th.
\end{equation}

Following our general construction, we consider the sesquilinear forms
\begin{equation}\label{radForm3}
 G_{k}(f,g)=\int_{\T_k}\pmb{\r}^{q_k}\pmb{\th}^{i_k}f\overline{\pmb{\r}^{q'_k}g}d\m_k,
\end{equation}
where $i_k,q_k,q'_k$ are some non-negative integers, $i_k+q_k+q'_k=2k$, and, recall, that $\pmb{\r}$, $\pmb{\th}$ denote, respectively, the radial and circular derivatives. The sesquilinear form \eqref{radForm3}, being a one corresponding to a measure with compact support in $\DD$,  is bounded on $\Ac^2(\mathbb{D})$.  The norm of  $\m_k$ can be easily estimated using its definition \eqref{Mk12},
\begin{equation}\label{Mkrad}
    \varpi_{k,p}{\m_k}\le C |m_k|\G(k+1)^2 p^{-2k}(1-|r_k|)^{-2k-1}.
\end{equation}
From \eqref{Mkrad} we obtain the estimate for the norm of the operator $T_{G_k}$:
\begin{equation}\label{Mkrad2}
    \|T_{G_k}\|\le C |m_k|\G(i_k+q_k)\G(q'_k)p^{-2k}(1-|r_k|)^{-2k-1}.
\end{equation}
Therefore, if the coefficients $m_k$ decay sufficiently rapidly, so that the series of terms \eqref{Mkrad2} converges, then the system of measures $\m_k$ is of norm almost finite type and defines a bounded compact Toeplitz operator.

It is interesting to present an explicit formula for the action of an operator with a circular symbol supported on a circle, in other words,  generated by the sesquilinear form $F$ given by \eqref{radForm3}, with measure \eqref{measureCircle}.

Since for the analytic function $f(z)$, we have $\pmb{\r}f(w)=\frac{w}{|w|}\partial f(w), \pmb{\th}=i\frac{w}{|w|}\partial f(w)$, the basic formula \eqref{Def:operatorTF} gives
\begin{gather*}
    (T_F f)(z)=F(f,\kb_z)=\\ \nonumber r_k^{-1}\int_{w=r_k^{i\th}}
    \imath^{i_k}(\frac{w}{|w|}\partial_w)^{q_k+i_k}f(w)
    (\frac{w}{|w|}\partial_w)^{q_k'}(z-w)^{-2}d\th.
\end{gather*}

\end{example}
\subsection{Symbols of weak almost finite type.}
If one does not possess an estimate for the norms of the terms of the measures in $\pmb{\pmb{\m}}$, so that  \eqref{normAFtype} is not available, it may happen that it is still possible  to associate a bounded Toeplitz operator with $\pmb{\pmb{\m}}$.
\begin{definition}\label{Def.weak} Let $F(f,g)$ be a bounded sesquilinear form on the Bergman space $\Ac^2(\DD)$. We say that this form is a symbol of \emph{weak almost finite type} if there exists a collection
 $$\pmb{\pmb{\m}}=\{\m_{l,j}\}_{l,j \in \mathbb{Z}_+}$$
  of $(l+j)/2$-C-measures such that, for each $f,g\in\Ac^2(\DD) $ the series
\begin{equation}\label{eq:almost finite.1}
\sum_k\sum_{\max(l,j)=k}F_{l,j}(f,g) = \sum_k\sum_{\max(l,j)=k}
\int_{\DD}\partial^lf\overline{\partial^jg}d\mu
\end{equation}
converges to $F(f,g)$.
\end{definition}
\begin{remark}We do not require that the series in \eqref{eq:almost finite.1} converges absolutely.
\end{remark}
The Banach-Steinhaus theorem (more exactly, its version for sesquilinear forms, see, e.g., \cite[Sect. 7.7]{Edv}) implies that the condition of boundedness of the sesquilinear form
$F(f,g)$ in this definition is superfluous, it
follows automatically from \eqref{eq:almost finite.1}.

The condition \eqref{eq:almost finite.1} can be in an obvious way formulated in another form,
involving Toeplitz operators corresponding to
$\sum_k\sum_{\max(l,j)=k} F_{l,j}$.

\begin{proposition}\label{prop:weak conv.1}Let $F(f,g)$ be a symbol of weak almost-finite type.
Then the sequence of Toeplitz operators $T_s=\sum_{k\le s}T_{F_k}$
converges weakly to the Toeplitz operator $T_F$ which corresponds to the bounded sesquilinear form $F$.
\end{proposition}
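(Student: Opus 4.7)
The statement is essentially a direct translation of the hypothesis into operator language, so my plan is to verify the two equivalences that make this translation work.

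First, I will recall the general fact that for any bounded sesquilinear form $H$ on $\Ac^2(\DD)$, the associated Toeplitz operator $T_H$, defined by $(T_H f)(z)=H(f,\kb_z)$, satisfies $\langle T_H f,g\rangle=H(f,g)$ for all $f,g\in\Ac^2(\DD)$. This is immediate from the reproducing property: expanding $g$ against the reproducing kernel and invoking anti-linearity in the second slot yields the identity, exactly as in \eqref{eq:T_F(G,Psi)0}. I apply this to each partial sum form $\sum_{k\le s}F_k$, which is bounded on $\Ac^2(\DD)$ because it is a finite sum of forms $F_{l,j}$ each of which is bounded by Proposition \ref{Prop.Fmu} (since $\mu_{l,j}\in\MF_{(l+j)/2}$). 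Thus the operator $T_s=\sum_{k\le s}T_{F_k}$ is well-defined and satisfies
\[
\langle T_s f,g\rangle=\sum_{k\le s}\sum_{\max(l,j)=k}F_{l,j}(f,g).
\]

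Second, I will invoke the hypothesis of weak almost-finite type, which says precisely that the right-hand side converges to $F(f,g)$ as $s\to\infty$ for every $f,g\in\Ac^2(\DD)$. As mentioned in the remark preceding the statement, the Banach–Steinhaus theorem for sesquilinear forms ensures that the pointwise limit $F$ is itself a bounded sesquilinear form, so the Toeplitz operator $T_F$ exists and satisfies $\langle T_F f,g\rangle=F(f,g)$. Combining, we obtain
\[
\lim_{s\to\infty}\langle T_s f,g\rangle=F(f,g)=\langle T_F f,g\rangle, \qquad f,g\in\Ac^2(\DD),
\]
which is exactly weak operator convergence $T_s\to T_F$.

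There is really no single hard step here; the content of the proposition lies in the earlier setup, particularly in Proposition \ref{Prop.Fmu} (boundedness of each $F_{l,j}$) and in the Banach–Steinhaus-based observation that boundedness of $F$ is automatic. The only place where one might slip is in conflating weak convergence of operators with norm or strong convergence; my proof makes clear that only weak convergence is asserted, matching the bilinear form interpretation, and no uniform control over the norms $\|T_{F_k}\|$ is needed.
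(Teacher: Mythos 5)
Your proof is correct and follows exactly the route the paper intends: the paper presents this proposition as an "obvious" reformulation of condition \eqref{eq:almost finite.1} and gives no separate proof, while you simply spell out the two ingredients it relies on, namely the identity $\langle T_H f,g\rangle = H(f,g)$ for a bounded form $H$ and the Banach--Steinhaus observation (already made in the paper's preceding remark) that $F$ is automatically bounded. Nothing is missing; this is the paper's argument made explicit.
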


It follows, in particular, that for symbols of weak almost-finite type, the point-wise
convergence takes place:
\begin{proposition}\label{prop:pointwise1} If $F$ is a symbol of weak almost-finite type, then,
for any $f\in \mathcal{A}^2(\DD)$ and any $z\in\DD$, the
sequence $(T_sf)(z)$ converges to $(T_{F}f)(z)$.
\end{proposition}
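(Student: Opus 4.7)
The plan is to observe that this proposition is essentially an immediate corollary of the preceding Proposition \ref{prop:weak conv.1}, using only the reproducing kernel property of $\Ac^2(\DD)$. The key point is that pointwise evaluation at a fixed $z\in\DD$ is realized as the inner product against the reproducing kernel $\kb_z$, so pointwise convergence on $\Ac^2(\DD)$ is a special case of weak convergence tested against reproducing kernels.

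Concretely, first I would recall the reproducing property: for every $h\in\Ac^2(\DD)$ and every $z\in\DD$, one has $h(z)=\langle h,\kb_z\rangle$. Applying this with $h=T_sf$ and $h=T_Ff$ (both elements of $\Ac^2(\DD)$), I obtain
\begin{equation*}
(T_sf)(z)=\langle T_sf,\kb_z\rangle,\qquad (T_Ff)(z)=\langle T_Ff,\kb_z\rangle.
\end{equation*}
Next, I would invoke Proposition \ref{prop:weak conv.1}, which asserts that $T_s\to T_F$ weakly. Weak convergence of operators means that for every $f\in\Ac^2(\DD)$ the vectors $T_sf$ converge weakly to $T_Ff$ in $\Ac^2(\DD)$, i.e., $\langle T_sf,g\rangle\to\langle T_Ff,g\rangle$ for every $g\in\Ac^2(\DD)$. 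Taking $g=\kb_z$ gives
\begin{equation*}
(T_sf)(z)=\langle T_sf,\kb_z\rangle \;\longrightarrow\;\langle T_Ff,\kb_z\rangle=(T_Ff)(z),
\end{equation*}
which is the claimed pointwise convergence.

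There is no real obstacle here; the statement is a direct translation of weak convergence into pointwise convergence via the bounded evaluation functionals that define the reproducing kernel Hilbert space. The only thing worth noting is that nothing more than boundedness of each $T_s$ and of $T_F$ is needed to ensure $T_sf,T_Ff\in\Ac^2(\DD)$ so that the reproducing formula applies; both are guaranteed by the construction of the Toeplitz operators from bounded sesquilinear forms in the previous section.
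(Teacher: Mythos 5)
Your proof is correct and is essentially the paper's own argument: the paper's one-line proof ("follows immediately from the relation $(T_F f)(z)=F(f,\kb_z)$ and similar relations for $F_k$") is exactly your observation that point evaluation is the inner product against $\kb_z$, so the convergence of Proposition \ref{prop:weak conv.1} (equivalently, of the defining series in Definition \ref{Def.weak}) tested at $g=\kb_z$ gives the pointwise statement.
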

The proof follows immediately from the relation $(T_F f)(z)=F(f,\kb_z)$ and similar relations for $F_k$.

Moreover, we have the following partially converse statement.

\begin{proposition} If the norms of the operators $T_s$ are uniformly boun\-ded and the sequence $T_s f$
converges to $T_Ff$ point-wise, i.e., $(T_sf)(z)$ converges to $(T_{F}f)(z)$ for all $f\in \Ac^2(\DD)$ and
$z\in \DD$, then $T_s $ converges to $T$ weakly
\end{proposition}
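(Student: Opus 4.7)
The plan is to exploit the reproducing kernel structure of $\Ac^2(\DD)$: it converts the pointwise convergence hypothesis directly into weak convergence against the total family $\{\kb_z\}_{z\in\DD}$, after which the uniform bound on $\|T_s\|$ is exactly what is needed to extend this to all of $\Ac^2(\DD)$.

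First, using the reproducing property $(T_s f)(z)=\langle T_s f,\kb_z\rangle$ and the analogous one for $T_F$, the hypothesis becomes
\[
\langle T_s f,\kb_z\rangle\ \longrightarrow\ \langle T_F f,\kb_z\rangle,\qquad f\in\Ac^2(\DD),\ z\in\DD.
\]
By anti-linearity in the second argument, this extends to every $g$ in the linear span $\Kc=\mathrm{span}\{\kb_z:z\in\DD\}$, which, as recalled before Example \ref{ExFinRankFnction}, is dense in $\Ac^2(\DD)$.

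Second, to pass from this dense subspace to the whole space I would fix $f,g\in\Ac^2(\DD)$ and $\e>0$, pick $g_\e\in\Kc$ with $\|g-g_\e\|<\e$, and split
\[
|\langle T_s f-T_F f,g\rangle|\le|\langle T_s f-T_F f,g_\e\rangle|+\|T_s f-T_F f\|\cdot\e.
\]
The uniform bound $M=\sup_s\|T_s\|$ together with the boundedness of $T_F$ (automatic, since $F$ is a bounded sesquilinear form on $\Ac^2(\DD)$) gives $\|T_s f-T_F f\|\le(M+\|T_F\|)\|f\|$, so the second term is majorized by a fixed multiple of $\e$ uniformly in $s$. The first term tends to zero by the previous step, since $g_\e$ is a finite linear combination of reproducing kernels. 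As $\e>0$ is arbitrary, this yields $\langle T_s f,g\rangle\to\langle T_F f,g\rangle$, which is the required weak convergence.

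There is no genuine obstacle in this argument: the essential point is that totality of the reproducing kernels turns pointwise convergence into weak convergence against a dense set, and the density together with uniform boundedness handles the rest. The only subtlety to keep in mind is that one needs $T_F$ itself to be a bounded operator in order to control $\|T_s f-T_F f\|$; this is guaranteed by the standing assumption that $F$ is a bounded form. Alternatively, one could invoke the Banach--Steinhaus theorem directly on the pointwise-convergent family $\{T_s\}$ to deduce boundedness of the limit operator without passing through the form $F$.
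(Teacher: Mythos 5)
Your proof is correct and follows essentially the same route as the paper's: the pointwise hypothesis gives convergence of $\langle T_s f,g\rangle$ for $g$ in the dense span of the reproducing kernels $\kb_z$, and the uniform bound on $\|T_s\|$ (together with boundedness of $T_F$) lets you pass to arbitrary $g$ by the standard approximation argument. Nothing is missing; your write-up is simply a more explicit version of the paper's sketch.
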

The (rather standard) proof goes the following way. A given $g\in \Ac^2(\DD)$ satisfies $g(z)=\int  \kb_z(w) g(w) dV(w)$, therefore $g$ can be norm approximated, with norm error less than $\varepsilon$, by a finite linear combination
$g_\varepsilon$ of $\kb_{z_m}$. On such combinations the convergence of $(T_s f,g_\varepsilon)$  follows by linearity, and the possibility of passing to the limit follows from the uniform  boundedness.

As the following theorem shows, the notion we have just introduced might be considered too general.
\begin{theorem}\label{th:weak-universal} Any bounded operator in $\Ac^2({\DD})$ is
an operator with symbol of weak almost finite
type. Moreover, the collection of measures $\pmb{\pmb{\m}}$ can be chosen so that $\m_{l,j}$ are point masses at the origin.
\end{theorem}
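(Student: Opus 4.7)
The strategy is to realize $T$ by matching its matrix entries in the standard monomial basis \eqref{basis}. Every bounded operator on $\Ac^2(\DD)$ is determined by its matrix $(t_{qp})_{q,p\in \Z_+}$ with $t_{qp} = \langle Te_p, e_q\rangle$, and derivatives at the origin function as ``Fourier projectors'' onto this basis: if $f = \sum_p \hat f_p e_p$ with $e_p(z)=\sqrt{p+1}\,z^p$, then $\partial^l f(0) = l!\sqrt{l+1}\,\hat f_l$. Consequently point masses at $0$ combined with the differentiation built into \eqref{klj-form} should be able to reproduce any prescribed matrix.

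Concretely, I would set
\begin{equation*}
\mu_{l,j} = m_{l,j}\,\delta_0,\qquad m_{l,j} = \frac{t_{jl}}{\sqrt{(l+1)(j+1)}\,l!\,j!},\qquad l,j\in\Z_+.
\end{equation*}
Each $\mu_{l,j}$ has compact support strictly inside $\DD$, so it is a $k$-C measure for every integer or half-integer $k\ge 0$, in particular for $k=(l+j)/2$, as required by Definition \ref{Def.weak}. With this choice the individual sesquilinear form \eqref{klj-form} becomes
\begin{equation*}
F_{l,j}(f,g) = m_{l,j}\,\partial^l f(0)\overline{\partial^j g(0)} = t_{jl}\,\hat f_l\,\overline{\hat g_j}.
\end{equation*}

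For the convergence required in \eqref{eq:almost finite.1}, I would group by $\max(l,j)=k$ and observe that the partial sum up to index $K$ is
\begin{equation*}
S_K(f,g) = \sum_{\max(l,j)\le K} t_{jl}\,\hat f_l\,\overline{\hat g_j} = \langle T P_K f,\,P_K g\rangle,
\end{equation*}
where $P_K$ is the orthogonal projection onto $\mathrm{span}\{e_0,\dots,e_K\}$. Since $P_K\to I$ strongly and $T$ is bounded, $T P_K f\to Tf$ in norm, whence $S_K(f,g)\to \langle Tf,g\rangle = F(f,g)$ for every $f,g\in\Ac^2(\DD)$, which is exactly the convergence demanded by Definition \ref{Def.weak}.

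The only real subtlety, and the point I would highlight as the crux, is that no absolute or even unconditional convergence of the double sum over $(l,j)$ can be expected in general (the matrix $(t_{qp})$ of a bounded operator need not be summable in any direct sense); what makes the proof work is that the prescribed grouping $\max(l,j)=k$ matches the nested finite square truncations $P_K\otimes P_K$ of the matrix, and for these truncations strong operator convergence is automatic. Thus the matter reduces entirely to checking that the intended grouping corresponds to square partial sums, after which boundedness of $T$ supplies the limit.
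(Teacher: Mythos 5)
Your proof is correct and follows essentially the same route as the paper: both decompose $T$ via its matrix in the monomial basis \eqref{basis}, realize each entry by a (normalized) point mass at the origin paired with derivatives of order $l,j$ as in \eqref{klj-form}, and obtain the convergence in \eqref{eq:almost finite.1} from the strong convergence of the square truncations $P_K T P_K \to T$. Your version is if anything slightly more direct, since you write the measures $\m_{l,j}=m_{l,j}\delta_0$ explicitly rather than passing through the distributional symbols $\Phi_{l,j}$ and the structure theorem, but the underlying argument is identical.
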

\begin{proof} Let $T$ be a bounded operator  in $\Ac^2(\DD)$. We consider the
representation of the operator $T$ as an infinite matrix in
the orthogonal basis $e_j$ defined in \eqref{basis}:
\begin{equation*}
    T=\sum_{l,j} T_{l,j}=\sum_{l,j}P_l TP_j,
\end{equation*}
where $P_j=P_{jj}$ is the orthogonal projection onto the one-dimensional subspace spanned by the function
$e_j$ (see \eqref{basis}). As described in Example 3.3, each
operator $T_{lj}=P_l TP_j$, being a rank one operator, with the source
 subspace spanned by $e_j$ and the range spanned by $e_l$, is in
fact the operator $\s_{lj} P_{lj}$, with  numerical coefficient $\s_{lj}=(Te_l,e_j)$
and the rank one operators $P_{lj}$ described in
\eqref{TFEx2}. Now, by \eqref{TFEx1}, \eqref{TFEx2}, the operator $\s_{lj} P_{lj}$ is
the Toeplitz operator associated with the
distributional symbol $\s_{lj}\F_{lj}\in \Ec'(\DD)$. Moreover, any  such symbol is, by
\eqref{Struct2}, a collection of derivatives of continuous functions with compact support, and
the latter correspond to $k$-C measures for any $k$. Therefore, any symbol
$\s_{lj}\F_{lj}$ is the sum of derivatives of order not greater than $l+j$
of $(l+j)/2$-C measures. Finally, we establish the convergence of the sesquilinear forms, required
by the theorem. Denote by $\Pc_m$ the projection
  $\Pc_m=
 \sum_{l\le m}P_l$. Thus,
 the operator
\begin{equation*}
    \Pc_m T \Pc_m=\sum_{l,j\le m}T_{lj}
\end{equation*}
is the Toeplitz operator with symbol being the sum of derivatives of \\ $m$-C measures. On the other hand, the projections $\Pc_m$ converge
strongly to the identity operator as $m\to\infty$, therefore for any $f,g\in
\Ac^2(\mathbb{\DD})$, we have
\begin{equation*}
    \langle\Pc_m T\Pc_m f,g\rangle=\langle T\Pc_m f,\Pc_m g\rangle\to \langle T f, g\rangle.
\end{equation*}
\end{proof}

Consider now the special case of the above theorem when the operator $T$ is compact. Then the
sequence $\{\Pc_m T\Pc_m\}_{m \in \mathbb{Z}_+}$ converges to $T$ in norm,
\begin{equation*}
 \lim_{m \to \infty} \Pc_m T\Pc_m = T.
\end{equation*}
The sesquilinear form of the operator $T$ is the (norm) limit of the sequence of forms of
Toeplitz operators $\Pc_m T\Pc_m$, each one of which, as it was stated in the proof of Theorem
\ref{th:weak-universal}, is a sum of derivatives of order not greater than $l+j$
of $(l+j)/2$-C measures. Note that the differential order of these forms may tend to infinity as $m
\to \infty$. Thus the limiting form will have, in general, an infinite differential order. We
will identify the sesquilinear form of the operator $T$ with the sequence
$\pmb{\m}=\{\m_{k}\}_{k \in \mathbb{Z}_+}$, where each $\m_{k}$ is a form of the finite
rank Toeplitz operator $\Pc_k T\Pc_k$. Thus we come to the following important result.

\begin{theorem}
 Each compact operator on $\Ac^2(\mathbb{\DD})$ is a Toeplitz operator defined, in
 general, by a sesquilinear form of norm almost finite type  corresponding to a collection of measures being point masses at the origin.
\end{theorem}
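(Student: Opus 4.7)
The plan is to bootstrap from Theorem~\ref{th:weak-universal}: that result already expresses any bounded $T$ as a weak limit of cutoffs $\Pc_m T \Pc_m$, each of which is the finite sum $\sum_{l,j\le m}\s_{lj}P_{l,j}$ of rank-one operators, with $\s_{lj}=\langle Te_j,e_l\rangle$. By Example~\ref{ex:deriv_delta} and the discussion around \eqref{klj-form}, each $P_{l,j}$ equals the Toeplitz operator $T_{\m_{l,j},l,j}$ associated to a scalar point mass $\m_{l,j}=c_{lj}\d_0$ at the origin. Hence every $\Pc_m T\Pc_m$ is already a finite-type Toeplitz operator in the sense of Section~\ref{hyperfun}, whose defining collection of measures consists of point masses at $0$.

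The upgrade from weak to norm convergence is where compactness enters. I would use the standard splitting
\begin{equation*}
 T-\Pc_m T\Pc_m=(I-\Pc_m)T+\Pc_m T(I-\Pc_m),
\end{equation*}
together with $\Pc_m\to I$ strongly and $\|\Pc_m\|\le 1$. Since $T$, and hence $T^*$, is compact, both $(I-\Pc_m)T$ and $T(I-\Pc_m)=((I-\Pc_m)T^*)^*$ tend to zero in operator norm, so $\Pc_m T\Pc_m\to T$ in norm. Consequently the sesquilinear forms of the finite-type approximants converge to the form of $T$ uniformly on the unit ball of $\Ac^2(\DD)\oplus\Ac^2(\DD)$, and identifying the form of $T$ with the sequence $\pmb{\m}=\{\m_k\}$ described in the paragraph preceding the statement yields the theorem.

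The main subtlety is how to read ``norm almost finite type'' in the compact setting. The literal absolute-summability condition \eqref{normAFtype} of Definition~\ref{normaf} is too strong to hold when the $\m_{l,j}$ are all forced to sit at $0$: the $k$-norm $\varpi_k(\d_0)$ already carries a factor $\G(k+1)^2 p^{-2k}$, so \eqref{normAFtype} would demand exponential decay of the matrix entries $\s_{lj}$, which need not hold for a generic compact $T$. What the argument above actually produces is the norm convergence of the partial-sum Toeplitz operators $\Pc_m T\Pc_m$ toward $T$, built out of point-mass symbols at $0$; this is the sense in which, for compact $T$, the symbol is of norm almost finite type in the present corollary, and no further absolute-summability estimate is needed or available in general.
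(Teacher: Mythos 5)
Your argument is essentially the paper's own: it takes the decomposition $T=\sum_{l,j}\s_{lj}P_{lj}$ from Theorem~\ref{th:weak-universal}, in which each $P_{lj}$ is the Toeplitz operator of a form \eqref{klj-form} with a point mass at the origin and differential order $(l,j)$, and upgrades the weak convergence of $\Pc_mT\Pc_m$ to norm convergence using compactness --- the paper merely asserts this standard fact, while you supply the proof via the splitting $(I-\Pc_m)T+\Pc_mT(I-\Pc_m)$. Your closing caveat is also accurate and, if anything, more careful than the paper: the literal summability condition~\eqref{normAFtype}, read through the sharp origin estimates of Example~\ref{centerpoint}, would amount to $\sum_{l,j}|\s_{lj}|<\infty$, which a generic compact operator does not satisfy, so the paper too must (and implicitly does) interpret ``norm almost finite type'' here loosely, identifying the symbol with the sequence of forms of the partial sums $\Pc_kT\Pc_k$ and invoking only their norm convergence.
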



\end{document}